\documentclass{article}



    \usepackage[preprint]{neurips_2020}



\usepackage[utf8]{inputenc} 
\usepackage[T1]{fontenc}    
\usepackage{url}            
\usepackage{booktabs}       
\usepackage{amsfonts}       
\usepackage{nicefrac}       
\usepackage{microtype}      
\usepackage{algorithm}
\usepackage{algorithmic}

\usepackage{graphicx}
\usepackage{caption}
\usepackage{amsfonts}
\usepackage{amsmath}
\usepackage{amssymb}
\usepackage{fancyhdr}
\usepackage{titlesec}
\usepackage{indentfirst}
\usepackage{booktabs}
\usepackage{verbatim}
\usepackage{color}
\usepackage{tcolorbox}
\usepackage{mathtools}
\usepackage{bm}
\usepackage{amsthm}
\usepackage{wasysym}
\usepackage{empheq}

\newtheorem{theorem}{Theorem}[section]
\newtheorem{remark}[theorem]{Remark}

\newtheorem{lemma}[theorem]{Lemma}
\newtheorem{corollary}[theorem]{Corollary}

\newcommand{\argmin}{\operatornamewithlimits{argmin}}
\newcommand{\argmax}{\operatornamewithlimits{argmax}}

\usepackage{tikz}
\newcommand*\circled[1]{\tikz[baseline=(char.base)]{
            \node[shape=circle,draw,inner sep=0.8pt] (char) {#1};}}
\def\a{\alpha}

\def\b{\beta}
\def\d{\delta}
\def\e{\epsilon}
\def\g{\gamma}
\def\lam{\lambda}
\def\o{\omega}

\def\s{\sigma}
\def\th{\theta}
\def\v{\varepsilon}
\def\th{\theta}

\def\O{\Omega}
\def\D{\Delta}


\def\A{\mathbb A}

\def\E{\mathbb E}

\def\R{\mathbb R}
\def\S{\mathbb S}
\def\P{\mathbb P}
\def\V{\mathbb V}

\def\mP{\mathcal P}

\def\l{\left}
\def\r{\right}

\def\ll{\left\lVert}
\def\rl{\right\rVert}
\def\lv{\left\lvert}
\def\rv{\right\rvert}
\def\({\left(}
\def\){\right)}

\def\dsp{\displaystyle}

\def\pt{\partial}
\def\nb{\nabla}

\def\ds{\displaystyle}

\def\qd{\quad}

\def\h{\hat}
\def\t{\tilde}

\def\ds{{d_s}}
\def\da{{d_a}}

\def\sm{{s_m}}
\def\zm{Z_m}
\def\dsm{\Delta s_m}
\def\am{{a_m}}

\def\smp{s_{m+1}}
\def\zmp{Z_{m+1}}
\def\dsmp{\Delta s_{m+1}}
\def\smpt{s_{m+2}}
\def\amp{{a_{m+1}}}

\def\Q{Q^\pi}

\def\se{\sqrt{\e}}
\def\dth{{d_\th}}

\def\hF{\h{F}}
\def\tF{\t{F}}
\def\tJ{\t{J}}

\def\hp{\h{p}}
\def\tp{\t{p}}
\def\hd{\h{d}}
\def\hj{\h{j}}
\def\td{\t{d}}
\def\st{*}
\def\Qst{Q^{\st}}
\def\tth{\t{\th}}
\def\pinf{p^\infty}

\def\hf{\h{f}}

\def\dthst{d_{\th_\st}}
\def\T{\mathbb{T}}

\def\Ds{\D s}
\def\je{j^{\textrm{eval}}}
\def\jc{j^{\textrm{ctrl}}}
\def\Th{\Theta}

\title{Borrowing From the Future: Addressing Double Sampling in Model-free Control}

%

\author{Yuhua Zhu \\
  Department of Mathematics\\
  Stanford University\\
  \texttt{yuhuazhu@stanford.edu} \\
   \And
   Zach Izzo \\
    Department of Mathematics\\
  Stanford University\\
   \texttt{zizzo@stanford.edu} \\
   \And
   Lexing Ying\\
    Department of Mathematics\\
   and\\
      Institute for Computational and Mathematical Engineering\\
  Stanford University\\
   \texttt{lexing@stanford.edu} \\
}

\begin{document}

\maketitle

\begin{abstract}
In model-free reinforcement learning, the temporal difference method and its variants become
unstable when combined with nonlinear function approximations. Bellman residual minimization with
stochastic gradient descent (SGD) is more stable, but it suffers from the double sampling problem:
given the current state, two independent samples for the next state are required, but often only one sample is available. Recently, the authors of \cite{zhu2020} introduced the 
borrowing from the future (BFF) algorithm to address this issue for the prediction problem. The main
idea is to borrow extra randomness from the future to approximately re-sample the next state when the underlying dynamics of the problem are sufficiently smooth. This paper extends the BFF algorithm to action-value
function based model-free control. We prove that BFF is close to unbiased SGD when the underlying
dynamics vary slowly with respect to actions. We confirm our theoretical findings with
numerical simulations.
\end{abstract}


  

\section{Introduction}
\label{sec:Intro}
\paragraph{Background}
The goal of reinforcement learning (RL) is to find an optimal policy which maximizes the return of a
Markov decision process (MDP) \cite{sutton2018reinforcement}.  One of the most common ways of
finding an optimal policy is to treat it as the fixed point of the Bellman operator.
Researchers have developed efficient iterative methods such as temporal difference (TD) \cite{sutton1988learning}, $Q$-learning
\cite{Watkin1989}, and SARSA \cite{Rummery1994} based on the contraction property of the Bellman operator.

Nonlinear function approximations have recently received a great deal of attention in RL. This follows
the successful application of neural networks (NNs) to Atari games \cite{mnih2013playing,Mnih2015}, as well as in
Alpha Go and Alpha Zero \cite{Silver2016, Silver2017}. However, when using a nonlinear approximation
and off-policy data, the Bellman operator fails to retain the contraction
property. The result is that training naive NN approximation may be unstable. Many variants and modifications
have been proposed to stabilize training. For example, DQN\cite{Mnih2015} and A3C
\cite{Mnih2016} stabilize $Q$-learning by using a slowly changing target network and replaying over
past experiences or using parallel agents for exploration; double DQN reduces instability by
using two separate $Q$ value estimators, one for choosing the action and the other for evaluating
the action's quality \cite{hasselt2015}.


Another way to stabilize RL with a nonlinear approximation is to formulate it as a minimization
problem. This approach is known as Bellman residual minimization (BRM) \cite{baird1995}. However,
applying stochastic gradient descent (SGD) to BRM directly suffers from the so-called double
sampling problem: at a given state, two independent samples for the next state are required in order
to perform unbiased SGD. Such a requirement is often hard to fulfill in a model-free setting,
especially for problems with a continuous state space.

\paragraph{Contributions}
In this paper, we revisit BRM for $Q$-value prediction and control problems in the model-free RL
setting. The main assumption is that the underlying dynamics of the MDP can be
written as $\E[\smp - \sm | \sm, \am] = \mu(\sm, \am)\e$, where $\e$ is a small parameter. Note that
knowledge of the dynamics is not required to implement the algorithm.  We extend the
borrowing-from-the-future (BFF) algorithm of \cite{zhu2020} to action-value based RL. The key idea
is to borrow extra randomness from the future by leveraging the smoothness of the
underlying RL problem. We prove that when the underlying dynamics change slowly with respect to
actions and the policy changes slowly with respect to states, the training trajectory of the
proposed algorithm is statistically close to the training trajectory of unbiased SGD. The difference
between the two algorithms will first decay exponentially and eventually stabilize at an error of
$O(\e\d_\st)$, where $\d_\st$ is the smallest Bellman residual that unbiased SGD can achieve.

\section{Models and key ideas}

\subsection{Continuous state space}
\label{sec: setting}
In model-free RL, consider a discrete-time MDP with continuous state space $\S \subset\R^{\ds}$. The
action space $\A \subset \R^{\da}$ maybe be continuous or discrete. We denote the transition kernel of the MDP as
\begin{equation}
  \label{trans matrix}
  P^a(s,s') = \P\l(\smp = s'| \sm = s,\am = a\r).
\end{equation}
The immediate reward function $r(s',s, a)$ specifies the reward if one takes action $a$ at state $s$
and ends up at state $s'$. A policy $\pi(a|s)$ gives the probability of taking action $a$ at state
$s$, i.e., $\P\l\{\text{take action }a \text{ at state }s\r\} = \pi(a \vert s).$ For a continuous
state space, it is often convenient to rewrite the underlying transition in terms of the states:
\begin{equation}  \label{def of transition}
  \smp = \sm + \mu(\sm, a)\e + \se \zm,
\end{equation}
where $\zm$ is a mean-zero noise term. This form is particularly relevant when the MDP arises as 
a discretization of an underlying stochastic differential equation (SDE), with $\e$ as its
discretized time step. We remark that this SDE interpretation is not necessary; our theorems and
algorithms apply to more general MDPs as long as the difference between the current and next state
can be written as
\begin{equation}
\label{eq: trans exp}
    \E[\smp - \sm | \sm, \am] = \mu(\sm, \am)\e.
\end{equation}
Throughout the paper, we consider the case where for each state, the variation of the underlying
drift $\mu(s,a)$ is a priori bounded in the action space.

The main object under study is the action-state pair value function $Q(s,a)$. There are
two types of problems: $Q$-evaluation and $Q$-control. $Q$-evaluation refers to the prediction
of the value function when the policy is given, while $Q$-control refers to finding the optimal policy through the
maximization of $Q(s,a)$. For the $Q$-evaluation problem the state space and action space can be
continuous or discrete, while for the $Q$-control problem we mainly consider the case of a (finite) discrete
action space.


\paragraph{$Q$-evaluation}
Given a fixed policy $\pi$, the value function $\Q(s,a)$ represents the expected return if one takes
action $a$ at state $s$ and follows $\pi$ thereafter, i.e.,
\begin{equation*}
  \Q(s, a) = \E \l[\l. \sum_{t\geq0} \g^t r(s_{m+t+1},s_{m+t},a_{m+t}) \r\vert \sm = s, \am = a \r],
\end{equation*}
where $\g\in(0,1)$ is a discount factor. The value function $\Q$ satisfies the Bellman equation
\cite{sutton2018reinforcement} $\Q(s, a) = \T^{\pi} \Q(s,a)$, where
\begin{equation}
\label{eq: bellman}
     \qd \T^\pi \Q(s,a) = \E\l[\l.r(\smp,\sm,\am)+\g\Q(\smp, \amp) \r\vert(\sm, \am) = (s, a) \r].
\end{equation}
In the nonlinear approximation setting, one seeks a solution to
\eqref{eq: bellman} from a family of functions $\Q(s, a;\th)$ parameterized by $\th\in\R^\dth$. For example, the function approximation family could be the set of all NNs of a given architecture, and $\th$ specifies the network weights. One way to find
$\Q(s,a;\th)$ is to solve the following {\it Bellman residual minimization (BRM) problem}:
\begin{equation}
  \label{eq: optimization}
  \min_{\th\in\R^\dth} \underset{(s,a)\sim\rho(s,a)}{\E} \d^2(s,a;\th)
\end{equation}
where $\rho(s,a)$ is a distribution over $\S\times \A$ and 
\begin{equation}
  \label{def of delta}
  \d(s,a;\th) = \T^\pi \Q(s,a;\th) - \Q(s,a;\th).
\end{equation}
Note that the expectation in \eqref{eq: optimization} can be taken with respect to different
distributions $\rho$. For online learning, it is often the stationary distribution of the Markov
chain. When $\S$ and $\A$ are discrete, it is also reasonable to choose a uniform distribution over
$\S\times\A$. Doing so often accelerates the rate of convergence compared to the stationary measure.

One approach for solving the Bellman minimization problem \eqref{eq: optimization} is to
directly apply SGD. The unbiased gradient estimate to the loss function is
\begin{equation}
  \label{def of F eval}
  F= j(\sm, \am, \smp;\th_m) \nb_\th j(\sm, \am, \smp'; \th_m),
\end{equation}
where 
\begin{equation}
  \label{def of j}
  j(\sm, \am, \smp;\th_m) = r(\smp,\sm,\am)+\g\int \Q(\smp,a;\th)\pi(a\vert\smp)da\  - \Q(\sm, \am;\th).
\end{equation}
Here $\smp$ is the next state in the trajectory, while $\smp'$ is an independent sample for the next
state according to the transition process. However, in model-free RL, as the underlying dynamics are
unknown, another independent sample $\smp'$ of the next state is unavailable. Therefore, this
unbiased SGD, refered to as {\em uncorrelated sampling} (US), is impractical. Even if one can store the
whole trajectory, it is impossible to revisit a certain state multiple times when the state space is
either continuous or discrete but of high dimension. This is the so-called {\em double sampling
  problem}. One potential solution, called {\em sample-cloning} (SC), simply uses $\smp$ as a surrogate for
$\smp'$, i.e. $s'_{m+1} = s_{m+1}$.  However, sample-cloning is not an unbiased algorithm for the
BRM problem, and its bias grows rapidly with the conditional variance of $\smp$ on $\sm$.



To address the double sampling problem, \cite{zhu2020} introduced the borrowing from the future
(BFF) algorithm. The main idea of the BFF algorithm is to borrow the future difference
$\dsmp=\smpt-\smp$ and approximate the second sample $\smp'$ with $\sm + \dsmp$. During SGD, the
parameter $\th$ is updated based on the following estimate of the unbiased gradient:
\begin{equation}  \label{def of hF eval}
  \hF = j(\sm, \am, \smp;\th_m) \nb_\th j(\sm, \am, \sm+\dsmp;\th_m),
\end{equation}
where $j$ is defined in \eqref{def of j}.  
When the difference between $\dsm$ and $\dsmp$ is small, the new $\smp'$ is statistically close to
the distribution of the true next state. Among the two versions (gradient based and loss function
based) introduced in \cite{zhu2020}, we adopt the gradient version, detailed in Algorithm \ref{algo:
  bff para}. In Section \ref{sec: related}, we comment on why the loss version is less accurate.

\begin{algorithm}
  \caption{BFF}
  \label{algo: bff para}
  \begin{algorithmic}[1]
    \REQUIRE $\eta$: Learning rate
    \REQUIRE $Q^\pi(s; \th) \in \R^{|\A|}$ or $\Q(s,a;\th)\in\R$: Nonlinear approximation of $Q$ parameterized by $\th$
    \REQUIRE $\je(s, a, s'; \th) := r(s', s, a) + \g \int Q^\pi(s', a; \th)\pi(a|s')da - Q^\pi(s, a; \th)$
    \REQUIRE $\th_0$: Initial parameter vector
    \STATE $m \gets 0$
    \WHILE{$\th_m$ not converged}
        \STATE $s'_{m+1} \gets s_m + (s_{m+2} - s_{m+1})$
        \STATE $\hat{F}_m \gets \je(s_m, a_m, s_{m+1}; \th_m)\nabla_\th \je(s_m, a_m, s'_{m+1}; \th_m)$
        \STATE $\th_{m+1} \gets \th_m - \eta \hat{F}_m$
        \STATE $m \gets m+1$
    \ENDWHILE
\end{algorithmic}
\end{algorithm}

Due to the Markov property, the difference $\dsmp$ is independent from the current difference
$\dsm$, leading to two conditionally independent samples. Whether $\hF$ is a good approximation of the unbiased
estimate $F$ depends on three factors: 1) the variation of the drift $\mu(s,a)$ over the action
space; 2) the variation of the policy $\pi(a|s)$ over the state space; 3) the size of $\e$. The
smaller these three elements are, the closer BFF is to US.

In Algorithm \ref{algo: bff para}, only one future step is used for generating a new sample of
$\smp$. In order to reduce the variance of the BFF gradient, it is useful to consider
replacing the future step by a weighted average of multiple future steps. The estimate of the
gradient then takes the form
\begin{equation} \label{eq: nbff}
  \hF^n = j(\sm, \am, \smp;\th_m) \sum_{i = 1}^n \a_i\nb_\th j(\sm, \am, \sm+\Ds_{m+i}; \th_m)
\end{equation}
with $\sum_i {\a_i} = 1$. This comes at the cost of potentially increasing the estimate's
bias.
 
\paragraph{$Q$-control}
The BFF algorithm mentioned above can be extended easily to $Q$-control, i.e., finding the value
function $\Qst$ of the optimal policy $\pi_\st$. $\Qst$ satisfies the Bellman equation $\Qst(s, a) =
\T^{\pi_\st}\Qst(s,a)$, where
\begin{equation}
\label{eq: bellman 2}
\T^{\pi_\st}\Qst(s,a)=\E\l[\l.r(\smp,\sm,\am)+\g\max_{a'}\Qst(\smp,a';\th) \r\vert(\sm,\am) =
  (s, a) \r].
\end{equation}
The BRM problem is the same as \eqref{eq: optimization} but with the Bellman residual $\d(s,a;\th)$
given by $\delta(s,a;\th)= \T^{\pi_\st}\Qst(s,a) - \Qst(s,a)$. Rather than generating a trajectory offline with a fixed policy, we instead generate a training trajectory
online using an $\e$-greedy policy. The algorithm for this case is identical to Algorithm \ref{algo:
  bff para}, but with $\je$ replaced by $\jc(s_m, a_m, s_{m+1}; \th) = r(s_{m+1}, s_m, a_m) + \g
\max_a Q^*(s_{m+1}, a; \th) - Q^*(s_m, a_m; \th)$.
Refer to Appendix \ref{appendix: control algo} for more details. 

\paragraph{Why BFF works}
We prove in Lemma \ref{lemma: diff uc-gd sc-gd} and \ref{lemma: control diff uc-gd sc-gd} that the
difference between the SC and US gradients is $O(\e)$, while the difference between the BFF
and US gradients is $O(\E[\d\e])$ (see Lemma \ref{lemma: diff uc-gd BFF short}). Although both differences are
$O(\e)$, BFF depends on the Bellman residual $\d$ while SC does not. As the algorithm proceeds, $\d$
approaches $0$, causing the difference between BFF and US to further decrease. On the other
hand, the difference between SC and unbiased SGD is always $O(\e)$. This is the high-level reason
why BFF outperforms SC. (See Section \ref{sec: numerics} for numerical comparisons.)

\subsection{Discrete state space}


When the state space is discrete, one can view $Q\in\R^{|\S|\times |\A|}$ as a matrix. In this tabular form, one can directly use the previous function approximation framework
by letting $\Q(s,a;\th) = \Phi(s,a)^\top\th $, where $\Phi(s_i,a_j)\in\R^{|\S|\times |\A|}$ is the matrix with $(i,j)$-th entry equal to $1$ and all other entries equal to $0$.
Equivalently, one can also derive the BFF algorithm directly by computing the gradient of the
Bellman residual with respect to $Q$. An unbiased gradient is given by
\begin{align*}
    F(s_m, a_m) &= -\je(s_m, a_m, s_{m+1}) \\
    F(s'_{m+1}, a) &= \pi(a|s'_{m+1})\g \je(s_m, a_m, s_{m+1}), \quad \forall a \in \A,
\end{align*}
where $s'_{m+1}$ is an independent sample of the next step in the trajectory given $s_m$ and $a_m$,
$\je(s_m, a_m, s_{m+1}) = r(s_{m+1}, s_m, a_m) + \g \sum_a Q^\pi(s_{m+1}, a)\pi(a|s) - Q^\pi(s_m,
a_m)$, and all other entries of $F$ are 0. By replacing the independent sample $s'_{m+1}$ with the
BFF approximation $s_m + \dsm$, we obtain the following BFF algorithm for the tabular case, 
summarized in Algorithm \ref{algo: bff tab}.

%
%
%
%
%

\begin{algorithm}
  \caption{BFF (tabular case)}
  \label{algo: bff tab}
  \begin{algorithmic}[1]
    \REQUIRE $\eta$: Learning rate
    \REQUIRE $Q^\pi \in \R^{|\S|\times |\A|}$: matrix of $Q^\pi(s,a)$ values
    \REQUIRE $\je(s_m, a_m, s_{m+1}) = r(s_{m+1}, s_m, a_m) + \g \sum_a Q^\pi(s_{m+1}, a)\pi(a|s) - Q^\pi(s_m, a_m)$
    \STATE $m \gets 0$
    \WHILE{$Q^\pi$ not converged}
    \STATE $s'_{m+1} \gets s_m + (s_{m+2} - s_{m+1})$
    \STATE $\hat{F}_m \gets 0 \in \R^{|\S|\times |\A|}$
    \STATE $\hat{F}_m(s_m, a_m) \gets -\je(s_m, a_m, s_{m+1})$
    \FOR{$a \in \A$}
    \STATE $\hat{F}_m(s'_{m+1}, a) \gets \pi(a|s'_{m+1})\g\je(s_m, a_m, s_{m+1})$
    \ENDFOR
    \STATE $Q^\pi \gets Q^\pi - \eta \hat{F}_m$
    \STATE $m \gets m+1$
    \ENDWHILE
  \end{algorithmic}
\end{algorithm}
The $Q$-control algorithm for the tabular case can be found in Appendix \ref{appendix: control algo}. As in equation (\ref{eq: nbff}), one can use multiple future steps to reduce the variance of the gradient in the tabular case as well. Refer to Appendix \ref{appendix: nbff tabular} for more details.

\subsection{Related work}
\label{sec: related}

There is another version of the BFF algorithm proposed in \cite{zhu2020} for value function
evaluation. One applies the same idea to the loss function instead of the gradient by
minimizing a biased Bellman residual:
\begin{equation}
\begin{aligned}
    \min_{\th\in\R^\dth} \E\l[ \E\l[\l. j(\sm,\am,\smp;\th)j(\sm,\am,\sm+\dsmp;\th)\r\vert\sm, \am
    \r]  \r].
\end{aligned}
\end{equation}
For state value function evaluation, this loss version performs better than the sample-cloning
algorithm because it has a difference of only $O(\e^2)$ from US while SC has an $O(\e)$
difference. However, this loss version does not work for $Q$-evaluation. The reason is that the
gradient of the above loss function contains two parts, $j(\smp) \nb_\th j(\sm+\dsmp) +\nb_\th
j(\smp) j(\sm+\dsmp)$, so the difference between the loss version of BFF and US is $O(\e\d +
\e\nb\d)$, and $\nb\d$ does not necessarily decrease as the algorithm proceeds.
(For example, if $\d^2 = \th^2$, then $\nb\d = 1$ is a constant.) Therefore, the
error is still dominated by an $O(\e)$ term, which means that the loss version behaves similarly to SC.

In \cite{wang2017stochastic,wang2016accelerating}, the stochastic compositional gradient method
(SCGD), a two-step scale algorithm, is proposed to address the double sampling problem.  However, it
is not clear how to apply SCGD to BRM with a continuous state space.


Another way to avoid the double sampling problem in BRM is to consider the primal-dual (PD)
formulation of the minimization problem and view it as a saddle point of a minimax problem. Such
methods include GTD and its variants
\cite{suttongtd2008,Sutton2009,Bhatnagar2018,Mahadevan2011,Liu2015}, and SBEED
\cite{Dai2018}. However, when a nonlinear function approximation is used, the maximum is taken over a non-concave function. This can be significantly more difficult than solving the
minimization problem directly. (See Section \ref{sec: numerics} for details.)



\section{Theoretical results}
\label{sec: main results}

This section states the main theoretical results which bound the difference between BFF and 
US on a continuous state space. 
Recall that the one-step transition is governed by the state dynamics
\begin{equation}  
  \smp = \sm + \mu(\sm, a)\e + \s\se \zm, 
\end{equation}
where $\mu(s,a)$ is the drift, $\zm$ is assumed to be normal $N(0, I_{\ds\times \ds})$, and $\s$ is
the diffusion coefficient. It is convenient to introduce $\dsm := \smp - \sm = \mu(\sm, a)\e + \s\se
\zm$.  For a discrete action space $\A$, the drift term $\{\mu(s, a)\}_{\a\in\A}$ is a family of
continuous functions, while for a continuous action space, $\mu(s,a)$ is a continuous function in
both state and action.  We choose to work with a discretized stochastic differential equation (SDE)
in order to simplify the presentation of the algorithms and the theorems. Our lemmas and theorems
can be extended to the more general case specified by \eqref{eq: trans exp}.


\subsection{Differences at each step}
\label{sec: diff grad}
The following lemma bounds the difference between BFF and US at each
step. That is, assuming the current parameters $\th_m$ are the same, Lemma \ref{lemma: diff uc-gd
  BFF short} bounds the expected difference between BFF and US for $Q$-evaluation and
$Q$-control after one step. See Appendix \ref{appendix: lemma 1} for a more detailed version of Lemma
\ref{lemma: diff uc-gd BFF short} and its proof.
\begin{lemma}[short version]
  \label{lemma: diff uc-gd BFF short}
  For $Q$-evaluation, assume
  \begin{equation}
    \label{ass1: eval}
    \dsp \sup_{s\in\S, \th\in\R^\dth}\lv\pt_s\E_a[ \nb_\th\Q(s, a;\th)|s] \rv\leq C, \qd \dsp
    \sup_{s\in\S, a\in\A}\lv\E_{a'}[\mu(s,a')|s] - \mu(s,a) \rv \leq C, \qd  a.s.
  \end{equation}
  For $Q$-control, let $\dsp f(s;\th) = \max_{a'\in\A} \Qst(s,a;\th)$ and assume 
  \begin{equation}
    \label{ass1: cont}
    \sup_{s\in\S,\th\in\R^\dth}|\pt_s\nb_\th f(\sm;\th)| \leq C , \qd  \sup_{s\in\S,a\in\A}|\E_{a'}[\mu(s,a') | s] -\mu(s,a) |\leq C, \qd  a.s.
  \end{equation}
  The difference between the BFF gradient $\hat{F}$ and the unbiased gradient $F$ is bounded by
  \begin{equation*}
    \lv\E[\hF] - \E[F]\rv\leq \g C^2\E\lv \d (\e  + O(\e))\rv.
  \end{equation*}
\end{lemma}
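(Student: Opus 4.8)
The plan is to exploit the fact that $\hF$ and $F$ share the common factor $j(\sm,\am,\smp;\th_m)$, so that
\[
\E[\hF]-\E[F]=\E\big[\,j(\sm,\am,\smp;\th_m)\big(\nb_\th j(\sm,\am,\sm+\dsmp;\th_m)-\nb_\th j(\sm,\am,s'_{m+1};\th_m)\big)\big].
\]
Since $\nb_\th\Q(\sm,\am;\th_m)$ enters both gradients with the same sign it cancels; writing $g(s;\th):=\int\nb_\th\Q(s,a;\th)\pi(a|s)\,da=\E_a[\nb_\th\Q(s,a;\th)\mid s]$, what survives is $\g\,\E[\,j(\sm,\am,\smp;\th_m)\,(g(\sm+\dsmp;\th_m)-g(s'_{m+1};\th_m))]$. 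I then condition on the current state $\sm$ and parameters $\th_m$ together with $\am$. Because $\dsmp=\mu(\smp,\amp)\e+\s\se Z_{m+1}$ with $Z_{m+1}\sim N(0,I)$ independent of $(\sm,\th_m,\am,\smp,\amp)$, while $s'_{m+1}=\sm+\mu(\sm,\am)\e+\s\se Z'_m$ with $Z'_m$ an independent standard normal, both inner Gaussian averages collapse onto the \emph{same} smoothed field $\bar g(x;\th):=\E_{Z\sim N(0,I)}[g(x+\s\se Z;\th)]$, giving
\[
\E[\hF-F\mid\sm,\th_m]=\g\,\E_{\am}\E_{\smp,\amp}\Big[\,j(\sm,\am,\smp;\th_m)\big(\bar g(\sm+\mu(\smp,\amp)\e;\th_m)-\bar g(\sm+\mu(\sm,\am)\e;\th_m)\big)\Big].
\]
The key point is that in the unbiased term $s'_{m+1}$ is drawn independently of $\smp$, so its contribution factorizes and there the $\smp$-average of $j$ is exactly the Bellman residual, $\E_{\smp}[j(\sm,\am,\smp;\th_m)\mid\sm,\th_m,\am]=\d(\sm,\am;\th_m)$; this is how $\d$ enters the estimate.

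Next I use that assumption~\eqref{ass1: eval} makes $g$ (hence $\bar g$, smoothing not increasing the Lipschitz constant) $C$-Lipschitz in $s$ uniformly in $\th$, so the bracket has norm at most $C\e\,|\mu(\smp,\amp)-\mu(\sm,\am)|$, and a first-order Taylor expansion of $\bar g$ writes it as $\e\,\pt_s\bar g(\sm;\th_m)\,(\mu(\smp,\amp)-\mu(\sm,\am))+O(\e^2)$. Averaging the $O(\e)$ part over $\amp\sim\pi(\cdot\mid\smp)$ and over $\smp=\sm+O(\e)+\s\se Z_m$, and using the slow variation of $\mu$ and $\pi$ in $s$ (the detailed assumptions of Appendix~\ref{appendix: lemma 1}) to replace $\E_{\amp}[\mu(\smp,\amp)\mid\smp]$ by $\E_{a'}[\mu(\sm,a')\mid\sm]$ up to $O(\se)$, the leading part of the $(\smp,\amp)$-average of the bracket becomes the quantity $\e\,\pt_s\bar g(\sm;\th_m)\,(\E_{a'}[\mu(\sm,a')\mid\sm]-\mu(\sm,\am))$, which is \emph{deterministic} given $(\sm,\th_m,\am)$ and has norm at most $C^2\e$ by the second half of \eqref{ass1: eval}. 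Multiplying by $\E_{\smp}[j\mid\sm,\th_m,\am]=\d(\sm,\am;\th_m)$ produces the leading term $\le\g C^2\e\,\E_{\am}|\d(\sm,\am;\th_m)|$; collecting the remainders — the $O(\e^2)$ Taylor term, the $O(\se)$ correction from $\smp\neq\sm$, and the covariance of $j$ with the $\smp$-fluctuating part of the bracket, each lower order in $\e$ — yields $|\E[\hF-F\mid\sm,\th_m]|\le\g C^2\,\E_{\am}|\d(\sm,\am;\th_m)|(\e+O(\e))$, and taking the outer expectation gives the claim. The $Q$-control case is entirely analogous: replace $g$ by $\nb_\th f$ with $f(s;\th)=\max_{a'}\Qst(s,a';\th)$, invoke \eqref{ass1: cont} in place of \eqref{ass1: eval}, and use the envelope identity $\nb_\th f(s;\th)=\nb_\th\Qst(s,a^\st(s);\th)$ to handle non-differentiability of the max at ties.

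The step I expect to be the main obstacle is the last round of bookkeeping. Because the BFF second sample $\sm+\dsmp$ is built from $\smp$, the factor $j(\sm,\am,\smp;\th_m)$ and the drift $\mu(\smp,\amp)$ are correlated through their shared dependence on $\smp$, so one cannot simply pull $\d$ out of the $\smp$-expectation. The remedy is to isolate the $\smp$-\emph{independent} leading part of the bracket — which, multiplied by $j$, reproduces $\d$ exactly — and to control the leftover correlation by Cauchy--Schwarz, using $\mathrm{Var}_{\smp}(j)=O(\e)$ together with the fact that the fluctuating part of the bracket is itself $O(\e^{3/2})$. Making these error terms genuinely subleading (not merely $O(\e)$ in absolute size), so that the bound is proportional to $\E|\d|$ as stated and therefore decays as training drives $\d$ toward $\d_\st$, is the delicate part of the argument.
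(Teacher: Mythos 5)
Your proposal is correct and takes essentially the same route as the paper's proof (Lemma A.1 in Appendix A): after the common $-\nabla_\th Q(s_m,a_m;\th)$ term cancels, both arguments reduce to a Taylor expansion in $\e$ whose leading difference is $\g\e\,\pt_s\E_a[\nabla_\th Q^\pi(s_m,a;\th)]\cdot\l(\E_{a'}[\mu(s_m,a')\mid s_m]-\mu(s_m,a_m)\r)$, which is bounded by $\g C^2\e$ via the two assumptions and then multiplied against the Bellman residual $\d$. If anything you are more careful than the paper at one point: equation \eqref{eq: exp BFF} asserts the factorization $\E[j\,\nabla_\th \hat{j}\mid s_m,a_m]=\d\,\E[\nabla_\th \hat{j}\mid s_m,a_m]$ outright even though $j$ and $\hat{j}$ are correlated through $s_{m+1}$, whereas you isolate that covariance explicitly and verify it is $o(\e)$.
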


Note that the upper bounds in the assumptions \eqref{ass1: eval} and \eqref{ass1: cont} that affect
the magnitude of the constant $C$ in front of $\E[\d\e]$ can be translated to assumptions on $\Q$,
$\pi$, and $\mu$. For instance, the first inequality in \eqref{ass1: eval} is satisfied if
$|\pt_s\nb_\th Q|$ and $|\pt_s\pi(a|s)|$ are bounded because $ \lv\pt_s\E_a[ \nb_\th\Q(s, a;\th)|s]
\rv = \lv\pt_s \int \l( \nb_\th\Q(s, a;\th) \pi(a|s) \r) \rv \leq C$. The magnitude of
$|\pt_s\nb_\th Q|$ can be controlled through the function space used to approximate $\Q$. Similarly,
the first equation in \eqref{ass1: cont} is related to $| \pt_s\nb_\th\Qst |$, which can be
controlled through the approximating function space as well.

The second inequality in \eqref{ass1: eval}, \eqref{ass1: cont}, ($\dsp \lv\E_a[\mu(s,a)|s] -
\mu(s,a) \rv \leq C$) is satisfied if $\forall s\in\S$,
\begin{equation}
  \label{cond on mu}
  \begin{aligned}
    &\text{discrete }\A: \qd \max_{a,b\in\A}|\mu(s,a) - \mu(s,b)| \leq C';\\
    &\text{continuous } \A: \qd |\pt_a\mu(s,a)| \leq C'.
  \end{aligned}
\end{equation}

In summary, the crucial elements that affect the difference between BFF and US are 1)
the magnitude of the change in the behavior policy $|\pt_s\pi(a|s)|$ and 2) the variation of the
drift term $\mu(s,a)$ over the action space. Therefore, when the policy changes more slowly with
respect to the state and the drift changes more slowly with respect to the action, the difference is
smaller and BFF performs better.

\subsection{Differences of density evolutions}

This subsection compares the probability density functions (p.d.f.) for the parameters over the
course of the complete BFF and US algorithms. To simplify the analysis, the p.d.f.s of the two
algorithms are modeled with the p.d.f.s of the continuous stochastic processes. The updates of the
parameter $\th_k$ by SGD can be viewed as a discretization of a function in time
$\Th_t\equiv\Th(t)$. 
It is shown in \cite{li2017stochastic,hu2017diffusion} that when the learning rate $\eta$ is small,
the dynamics of SGD can be approximated by a continuous time SDE
\begin{equation}
\label{eq: SDE}
    d\Th_t = - \E [F(\Th_t)] dt + \sqrt{\eta}\V[F(\Th_t)]dB_t
\end{equation}
with $\dsp \Th_{t=k\eta} \approx \th_k$, where $\E$ and $\V$ are expectation and variance taken over 
$\rho(s, a)$. 
Here $\E [F(\Th_t)]$ denotes the true gradient of population loss function in the case of US, or the
biased gradient of the population loss in the case of BFF. For simplicity, we assume $\V[F] \equiv
\xi$ is constant. Let $p(t,\th)$ and $\hp(t,\th)$ be the p.d.f. of the parameter $\th$ at step
$k=t/\eta$ for US and BFF, respectively, and define $\hd(t,\th) = p - \hp$ to be their difference. We
introduce the following weighted norm to measure the difference between the p.d.f.s:
\begin{equation*}
  \ll \hd \rl_\st := \int \hd^2/{\pinf} d\th, \qd \pinf = e^{- \frac{2}{\eta \xi}\E[\d^2]}/Z,
\end{equation*}
where $\pinf$ is the limiting p.d.f. for $p(t,\th)$ as $t\to \infty$, $Z = \int e^{-\b\E[\d^2]}d\th$
is a normalizing constant, and $\d(s,a;\th) = \T^\pi Q - Q$ is the Bellman residual $\T^\pi$ defined
in (\ref{eq: bellman}) for $Q$-evaluation and in \eqref{eq: bellman 2} for $Q$-control. Here the
expectation $\E$ is taken over $\rho(s,a)$. 

\begin{theorem}[short version]
\label{thm: diff of pdf short}
For small $\eta$, the difference $\hd$ of the p.d.f.s for US and BFF is bounded by
\begin{equation}
  \label{eq: diff of pdf short}
  \begin{aligned}
    \ll \hd(t) \rl_\st  \leq& C_1 e^{-C_2t} + O\l(\e\sqrt{\E[\d_\st^2]}\eta^{C_3}\r) \sqrt{1-e^{-C_2t}},
  \end{aligned}
\end{equation}
where $\E[\d^2_\st] = \min_{\th} \E[\d^2]$ and $C_1,C_2,C_3$ are all positive constants.
\end{theorem}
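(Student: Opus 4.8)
The plan is to lift the comparison to the Fokker--Planck level and run a weighted $L^2$ energy estimate in the $\ll\cd\rl_\st$ geometry, viewing BFF as a transport perturbation of the US flow; the argument is identical for $Q$-evaluation and $Q$-control, the only difference being which Bellman residual $\d$ (hence which bound from Lemma \ref{lemma: diff uc-gd BFF short}) is used. Write $L(\th):=\E[\d^2]$ for the population Bellman loss. With the US drift $\E[F]=\nb L$, the US density $p$ solves its Fokker--Planck equation in self-adjoint form $\pt_t p=\tfrac{\eta\xi}{2}\nb\cd\!\big(\pinf\nb(p/\pinf)\big)$, which is dissipative and has $\pinf$ as its unique stationary state. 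Since $\E[\hF]=\E[F]+b$ with $b(\th):=\E[\hF(\th)]-\E[F(\th)]$ the per-step bias, the BFF density satisfies $\pt_t\hp=\tfrac{\eta\xi}{2}\nb\cd\!\big(\pinf\nb(\hp/\pinf)\big)+\nb\cd(\hp\,b)$, and Lemma \ref{lemma: diff uc-gd BFF short} gives the pointwise control $|b(\th)|\le\g C^2(\e+O(\e))\,\E_\rho|\d(s,a;\th)|\le\g C^2(\e+O(\e))\sqrt{L(\th)}$.

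Subtracting, $\hd=p-\hp$ solves $\pt_t\hd=\tfrac{\eta\xi}{2}\nb\cd\!\big(\pinf\nb(\hd/\pinf)\big)-\nb\cd(\hp\,b)$. Differentiating the squared norm $\int\hd^2/\pinf\,d\th$, integrating by parts, and using $\int\hd\,d\th=0$ (both densities have unit mass), I get
\begin{equation*}
  \tfrac{d}{dt}\!\int\tfrac{\hd^2}{\pinf}\,d\th=-\eta\xi\!\int\pinf|\nb(\hd/\pinf)|^2 d\th+2\!\int\pinf\,\tfrac{\hp}{\pinf}\,b\cd\nb(\hd/\pinf)\,d\th .
\end{equation*}
Cauchy--Schwarz and Young applied to the cross term against half of the dissipation leave $\tfrac{d}{dt}\!\int\hd^2/\pinf\le-\tfrac{\eta\xi}{2}\!\int\pinf|\nb(\hd/\pinf)|^2+\tfrac{2}{\eta\xi}\!\int\hp^2b^2/\pinf$. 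I then invoke a Poincar\'e inequality for $\pinf$ --- the structural input, furnished by strong convexity of $L$ near its minimizer together with the regularity/growth hypotheses on $\Q(\cd;\th),\pi,\mu$, with a constant $\kappa(\eta)$ that must be tracked --- which, since $\int\hd=0$, gives $\int\pinf|\nb(\hd/\pinf)|^2\ge\kappa^{-1}\!\int\hd^2/\pinf$, hence $\tfrac{d}{dt}\!\int\hd^2/\pinf\le-C_2\!\int\hd^2/\pinf+\tfrac{2}{\eta\xi}\!\int\hp^2b^2/\pinf$ with $C_2=\eta\xi/(2\kappa)>0$.

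It remains to bound the source. From the estimate on $b$, $\int\hp^2b^2/\pinf\le\g^2C^4(\e+O(\e))^2\!\int(\hp^2/\pinf)L\,d\th$; splitting $\hp=\pinf+(\hp-\pinf)$ and using $\ll\hp-\pinf\rl_\st\le2\ll\hd\rl_\st+2\ll p-\pinf\rl_\st$ (the last term decaying exponentially by the same energy estimate run without a source) reduces matters to estimating $\langle L\rangle_{\pinf}$, and since $\b=2/(\eta\xi)$ is large a Laplace/concentration bound yields $\langle L\rangle_{\pinf}=\min_\th L+O(1/\b)=\E[\d_\st^2]+O(\eta)$. Assembling everything gives a scalar inequality $\tfrac{d}{dt}\!\int\hd^2/\pinf\le-C_2'\!\int\hd^2/\pinf+O\!\big(\e^2\,\E[\d_\st^2]\,\eta^{2C_3}\big)$; Gr\"onwall produces the exponential-plus-plateau bound on $\int\hd^2/\pinf$, and taking square roots (with $\sqrt{x+y}\le\sqrt x+\sqrt y$) and relabeling constants yields \eqref{eq: diff of pdf short}, the factor $\eta^{C_3}$ also absorbing the weak-approximation error of replacing the SGD iteration by the SDE \eqref{eq: SDE}.

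I expect the source estimate to be the main obstacle: the bias $b$ is small precisely where $\pinf$ concentrates ($|b|^2=O(\e^2L)$), so the bound hinges on balancing the exponential decay of the $1/\pinf$-weighted mass against the polynomial growth of $L$, and it is this balance that dictates the regularity/growth hypotheses, the $\eta$-scaling of $\kappa$, the admissible range of $\eta$, and ultimately the constant $C_3$. A secondary technical point is justifying the continuum model itself (boundary/decay conditions for the integrations by parts, and the Poincar\'e inequality for the $\eta$-dependent measure $\pinf$), which is where the appendix version of Lemma \ref{lemma: diff uc-gd BFF short} and the assumptions on the approximating family will be used quantitatively.
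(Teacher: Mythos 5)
Your proposal follows essentially the same route as the paper's Appendix B proof: a weighted $L^2$ energy estimate for the Fokker--Planck difference equation, the Poincar\'e inequality for $\pinf$, the per-step bias bound $|b|\lesssim \e\sqrt{\E[\d^2]}$ from Lemma \ref{lemma: diff uc-gd BFF short}, a Laplace-asymptotics bound on $\int \E[\d^2]e^{-\b\E[\d^2]}d\th$ (the paper's Lemma \ref{lemma: Gibbs}), and Gr\"onwall. The only points you gloss that the paper treats explicitly are the $O(\e)$ perturbation of the diffusion coefficient ($\V[\hF]-\V[F]$, which enters the difference equation alongside the drift bias) and the fact that the weighted quantity $\int \E[\d^2]\,(p-\pinf)^2/\pinf\,d\th$ appearing in the source term needs its own Gr\"onwall argument (the paper's Lemma \ref{lemma: d p^2}) rather than following directly from the decay of $\ll p-\pinf\rl_\st$.
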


The precise version of Theorem \ref{thm: diff of pdf short} and its proof are given in Appendix
\ref{appendix: thm}. This theorem implies that as the algorithm moves on, the difference between BFF
and US will decay exponentially. After running the algorithm for sufficiently many steps, the difference will
eventually be $O\l(\e\sqrt{\E[\d_\st^2]}\eta^{C_3}\r)$. As long as $\E[\d^2_\st]$ is small, BFF
will achieve a minimizer close to US with an error much smaller than $O(\e)$.  Note that
if $\E[\d^2_\st] = 0$, the difference still does not vanish. Instead, the leading order term of the
last term in \eqref{eq: diff of pdf short} becomes $O(\e\eta^{C_3+1/2})$, which is shown in Corollary
\ref{coro} of Appendix \ref{appendix: thm}.

The constant $C_1$ depends on the initial p.d.f. of the algorithm. The constant $C_3$ is related to
the shape of $\E[\d^2_\st](\th)$ in the parameter space. If the shape at the minimizer is flatter,
then $C_3$ is smaller. The constant $C_2$ decreases as $\eta$ decreases, so the first term increases
as $\eta$ decreases, while the last term $O(\e^2\E[\d^2_\st]\eta^{C_3})$ does the opposite. This
suggests that one should set the learning rate $\eta$ large at first, making the exponential decay
faster. As the training progresses, $\eta$ should be reduced to make the final error smaller.





\section{Numerical examples}
\label{sec: numerics}

Code for reproducing these experiments can be found in the supplementary material. Due to space constraints, full details of the experiments can be found in Appendix \ref{experiment details}.

In each of the settings below, we test the efficacy of learning $\Q$ via SC and BFF. We test the generalized version of BFF specified by equation (\ref{eq: nbff}). The label nBFF in the plots corresponds to to using the estimate $\hF^n$ from equation (\ref{eq: nbff}); 1BFF corresponds to the standard BFF algorithm (algorithms \ref{algo: bff para} and \ref{algo: bff tab}). In each case, we use the uniform weights $\a_i = 1/n$. When applicable, we also compare to US and PD. (For the full definition of the PD algorithm, see Appendix \ref{appendix: pd}.)

\subsection{Continuous state space} \label{sec: nn bff}
We consider an MDP with continuous state space $\S = [0,2\pi).$ The transition dynamics are 
\begin{equation*}
    \dsm = \am \e + \s \zm\se,
\end{equation*}
where $\am\in\A = \{\pm1\}$ is drawn from policy $\pi$ to be defined later and $\zm \sim
N(0,1)$. We set $\e = \frac{2\pi}{32}$ and $\s = 0.2$. The reward function is
$r(\smp,\sm,\am) = \sin(\smp)+1$.

In the first two experiments, we approximate $\Q$ with a neural network with two hidden layers. Each hidden layer contains
50 neurons and cosine activations. The NN takes a state as input and outputs a vector in $\R^{|\A|}$; the $i$-th entry of the output vector corresponds to $\Q(s, a_i)$. The CartPole experiments uses a larger network with ReLU activations.

\paragraph{$Q$-evaluation}\label{sec: nn eval}
We first estimating $Q^\pi$ for the fixed policy $\pi(a|s) = 1/2+a\sin(s)/5$.
The results are plotted in Figure \ref{fig: nn eval}. BFF exhibits superior performance compared to SC and PD, with only slightly worse
performance than the (impractical) US algorithm.
\begin{figure}[h]
\begin{centering}
  \includegraphics[width=\linewidth]{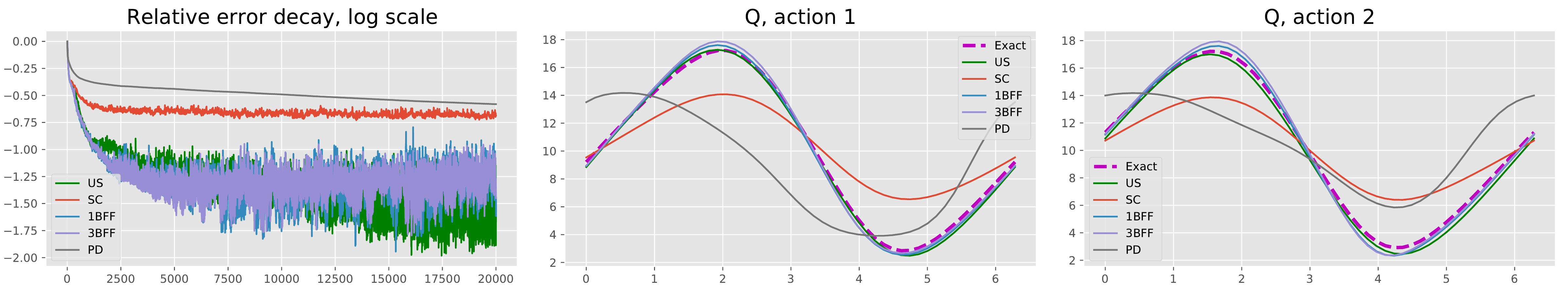}
  \caption{Results of each method for fixed-policy $Q$-evaluation. We plot the best result out of 10 runs for PD. The BFF algorithm performs better than both SC and PD. Changing the number of future steps used to compute the BFF approximation does not have a large impact on its performance in this case.}
  \label{fig: nn eval}
\end{centering}
\end{figure}

\paragraph{$Q$-control}\label{sec: nn control}
In the control case, we use a fixed behavior policy to generate the training trajectory. At each step, the behavior policy samples an action uniformly at random, i.e. $\pi(a|s) = 1/2$ for all $a\in \A$ and $s\in \S$.
The results are shown in Figure \ref{fig: nn control}. Again, BFF has comparable performance to SC and outperforms both SC and PD.

\begin{figure}[h]
\begin{center}
  \includegraphics[width=\linewidth]{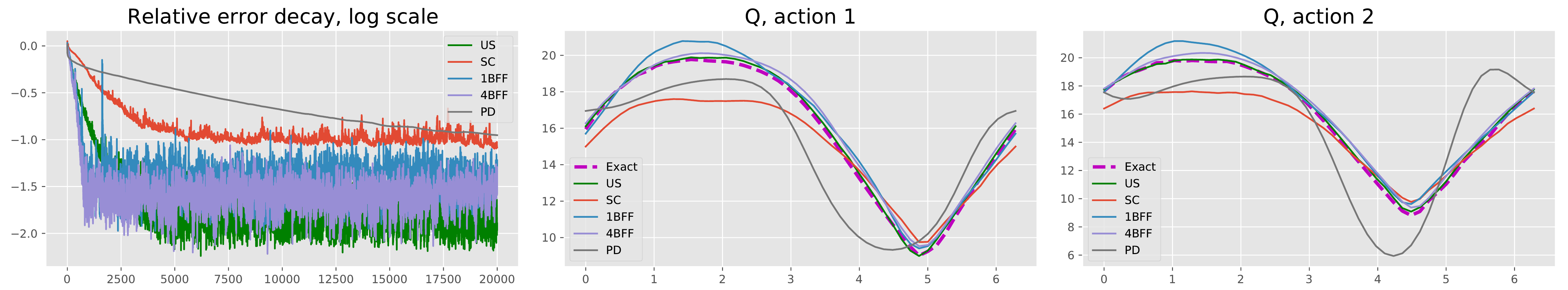}
  \caption{Results of each method for $Q$-control. We plot the best result out of 10 runs for PD. As before, the more accurate gradient estimate from BFF improves our learned approximation for $Q$. In this case, the variance reduction obtained from 4 future steps improved BFF's performance even more, giving results comparable to US.}
  \label{fig: nn control}
\end{center}
\end{figure}

\paragraph{CartPole}
We tested the BFF algorithm on the CartPole environment from OpenAI gym \cite{gym}. It is straightforward to modify BFF for use in conjunction with adaptive SGD algorithms such as Adam \cite{adam}, and we use BFF with Adam for this experiment. The results are plotted in Figure \ref{fig: cartpole}. BFF reaches the max reward (200) faster than SC and achieves it with greater
regularity throughout the training process. In contrast to both of these methods, the PD method fails to converge even after an extensive hyperparameter search.

\begin{figure}[h]
\begin{center}
  \includegraphics[width=\linewidth]{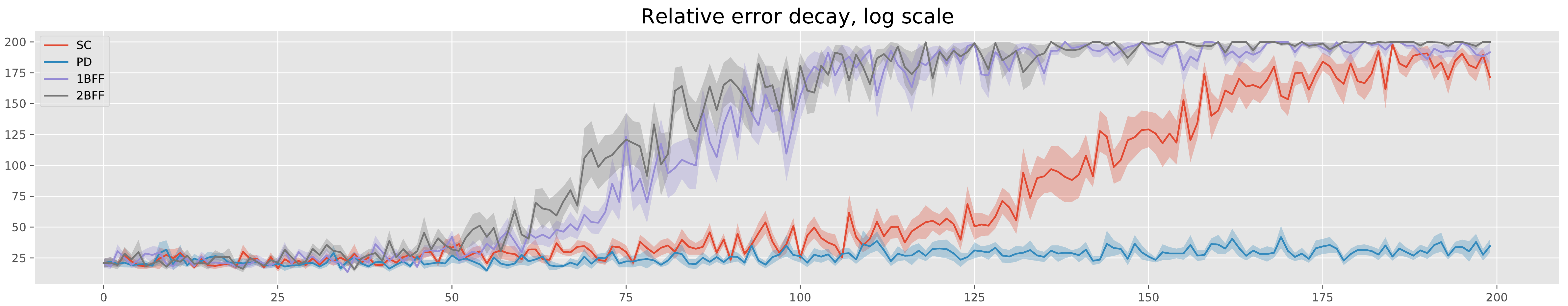}
  \caption{Reward per training episode for the CartPole experiment. BFF is the first to reach the
    maximum reward and achieves it more consistently than sample-cloning. It achieves slightly better performance using 2 future steps (2BFF in the plot). Despite an extensive
    hyperparameter search, PD was not able to learn an effective policy.}
  \label{fig: cartpole}
\end{center}
\end{figure}

\subsection{Tabular case} \label{sec: tab bff}
We next consider an MDP with a discrete state space $\S = \{\frac{2\pi k}{n}\}_{k=0}^{n-1}$ and $n =
32$. The transition dynamics are given by
\begin{equation}
    \Delta s_m = \frac{2\pi}{n} a_m \e + \sigma Z_m \sqrt{\e},
\end{equation}
where $a_m\in\A = \{\pm 1\}$ is drawn from the policy $\pi(a|s) = 1/2 + a\sin(s) / 5$ and $Z_m\sim
N(0, 1)$. We then set $s_{m+1} = \argmin_{s\in\S} |s_m + \Delta s_m - s|$. For the experiment
below, $\sigma = 1$ and $\e = 1$.
The results are plotted in Figure \ref{fig: tabular}. In this case, BFF is nearly indistinguishable
from training via US.  Due to space constraints and its similarity to the previous experiments, we
defer the case of tabular $Q$-control to the appendix.


\begin{figure}[h]
  \begin{centering}
    \includegraphics[width=\linewidth]{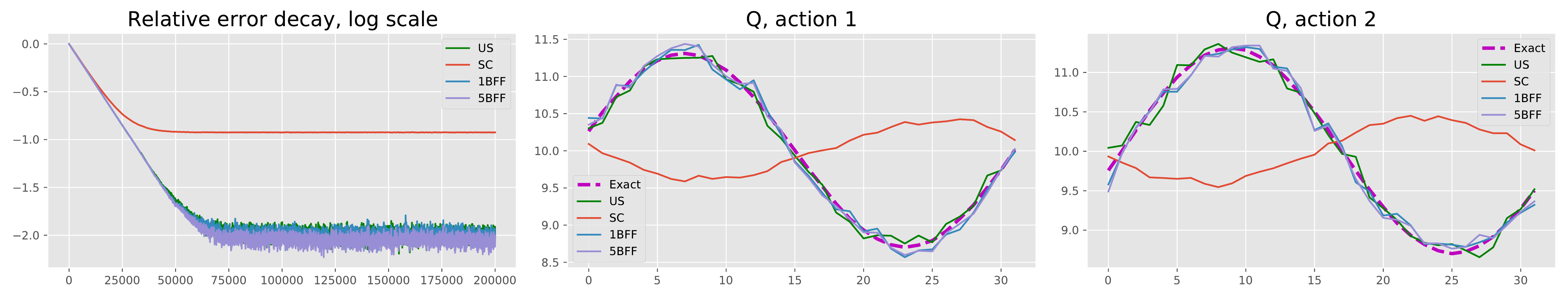}
    \caption{Results of each method for fixed-policy $Q$-evaluation in the tabular case. BFF gives a
      better estimate for the gradient than SC, leading to improved performance. BFF's performance does not change significantly with the number of future steps in this case. Note that the PD method does not apply to this case.}
    \label{fig: tabular}
  \end{centering}
\end{figure}

\section{Conclusion}
In this paper, we show that BFF has an advantage over other BRM algorithms for model-free RL
problems with continuous state spaces and smooth underlying dynamics. We also prove that the
difference between the BFF algorithm and the uncorrelated sampling algorithm first decays
exponentially and eventually stabilizes at an error of $O(\e\d_\st)$, where $\d_\st$ is the smallest
Bellman residual that US can achieve.


\section{Broader Impact}
The main societal impact of deep reinforcement learning has been its ability to automate ever more complicated tasks. Recent advances in driverless vehicles \cite{sallab2017} and automated control of robots \cite{gu2017} use deep $Q$-function approximations to learn an optimal policy. Our work on BFF contributes directly to improving the capabilities of automation.

Automation provides clear economic and utilitarian benefits. Well-designed robot or AI workers make fewer mistakes, produce greater output, and, in the long run, may cost less than their human counterparts. This leads to greater economic productivity and technological advances \cite{carlsson2012}.

Increased automation is not without its risks. As AI capabilities improve, large sections of the population may face unemployment \cite{leontief1986}. Members of underprivileged classes will likely be disproportionately affected by the decreased availability of low-skill jobs, while simultaneously having less access to the benefits automation provides. As the power of AI increases, so to does our understanding of the unintended consequences. For instance, the recent work of \cite{bissell2020} studies these effects in the case of autonomous vehicles.

BFF is a tool which can facilitate advances in science and technology, and the exacerbation of social inequality is an inherent risk of any new technology. It is via an ethical application of these new discoveries that society can realize the greatest benefit.

\newpage

\bibliographystyle{plainnat}

\appendix
\section*{Appendices}
\addcontentsline{toc}{section}{Appendices}
\renewcommand{\thesubsection}{\Alph{subsection}}

\subsection{Extension and Proof of Lemma \ref{lemma: diff uc-gd BFF short}} 
\label{appendix: lemma 1}
\setcounter{equation}{0}
\setcounter{theorem}{0}
\renewcommand\theequation{A.\arabic{equation}}
\renewcommand\thetheorem{A.\arabic{theorem}}

\begin{lemma}[Extension of Lemma \ref{lemma: diff uc-gd BFF short} for $Q$-evaluation]
\label{lemma: diff uc-gd BFF}
If \ $\dsp \sup_{s\in\S, \th\in\R^\dth}\lv\pt_s\E_a[ \nb_\th\Q(s, a;\th)|s] \rv\leq C$ and $\dsp
\sup_{s\in\S, a\in\A}\lv\E_a[\mu(s,a)|s] - \mu(s,a) \rv \leq C$ a.s., then the difference between the
gradients of the US and BFF algorithms for $Q$-evaluation is bounded by
\begin{equation*}
  \begin{aligned}
    &\lv\E[\hF] - \E[F]\rv\leq \g C^2\E\lv \d (\e + o(\e))\rv;
  \end{aligned}
\end{equation*}
In addition, if $\lv \E_a [\Q(s,a;\th)] - Q(s,a;\th)\rv,\lv \E_a [\nb_\th\Q(s,a;\th)] - \nb_\th
Q(s,a;\th)\rv,\lv\mu(s,a) - \mu(s,a')\rv,$ $\lv r(s,s,a)\rv \leq C$ for a.s. $\forall s\in \S,
a\in\A, \th\in \R^\dth$, then the difference between the variances can also be bounded by
\begin{equation*}
  \begin{aligned}
    &\lv \V[\hF] - \V[F]\rv \leq   O(\e),
  \end{aligned}
\end{equation*}
where $\V$ stands for the variance and 
\begin{equation*}
\begin{aligned}
    F &= j(\sm,\am,\smp;\th)\nb_\th j(\sm,\am,\smp';\th),\\
    \hF &= j(\sm,\am,\smp;\th)\nb_\th j(\sm,\am,\sm+\dsmp;\th),\\
    j&(\sm, \am, \smp;\th_m) = r(\smp,\sm,\am)+\g\int \Q(\smp,a;\th)\pi(a\vert\smp)da\  - \Q(\sm, \am;\th),\\
    \d&(\sm, \am;\th) = \E\l[ \l. j(\sm, \am, \smp;\th_m) \r\vert\sm, \am \r].
\end{aligned}
\end{equation*}
Note that the above form also works for the discrete action spaces. Specifically, $\pi(a|s)da=\dsp
\sum_{a_i\in\A}\pi(a_i|s)\d_{a_i}(a)da$ in the discrete action space, where $\d_{a_i}(a)$ is the
Dirac delta function.
\end{lemma}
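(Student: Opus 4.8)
The plan is to exploit two structural facts: $\hF$ and $F$ share the common first factor $j(\sm,\am,\smp;\th)$, and within $\nb_\th j(\sm,\am,s';\th)=\g\,\nb_\th G(s';\th)-\nb_\th\Q(\sm,\am;\th)$, where $G(s';\th):=\int\Q(s',a;\th)\pi(a\vert s')da$, only $\g\,\nb_\th G(s';\th)$ depends on the next state $s'$. The $-\nb_\th\Q(\sm,\am;\th)$ piece is therefore identical in $\E[\hF]$ and $\E[F]$ and cancels in the difference; together with conditional independence of $\smp,\smp'$ given $(\sm,\am)$ (which also identifies $\E[F]$ with the true gradient $\E_{(\sm,\am)}[\d\,\nb_\th\d]$), this reduces the task to bounding
\[
  \E[\hF]-\E[F]=\g\,\E\Big[\,j(\sm,\am,\smp;\th)\big(\nb_\th G(\sm+\dsmp;\th)-\nb_\th G(\smp';\th)\big)\Big].
\]

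Next I would condition on the $\sigma$-algebra $\mathcal F$ generated by $(\sm,\am,\smp,\amp)$. By the Markov property the future noise $\zmp$ in $\dsmp=\mu(\smp,\amp)\e+\s\se\zmp$, and the fresh noise generating the hypothetical independent sample $\smp'=\sm+\mu(\sm,\am)\e+\s\se Z'$, are both $N(0,I)$ and independent of $\mathcal F$; taking conditional expectations collapses the two $\nb_\th G$ terms to $\phi(\sm+\mu(\smp,\amp)\e)$ and $\phi(\sm+\mu(\sm,\am)\e)$, where $\phi(s):=\E_Z[\nb_\th G(s+\s\se Z;\th)]$ satisfies $\sup_s|\phi'(s)|\le\sup_s|\pt_s\nb_\th G(s;\th)|=\sup_s|\pt_s\E_a[\nb_\th\Q(s,a;\th)\vert s]|\le C$ --- exactly the first hypothesis. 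A mean-value expansion in the drift then gives
\[
  \E[\hF]-\E[F]=\g\e\,\E\Big[\,j(\sm,\am,\smp;\th)\,\phi'(\sm)\big(\mu(\smp,\amp)-\mu(\sm,\am)\big)\Big]+O(\e^2),
\]
the $O(\e^2)$ absorbing the variation of $\phi'$ along the interpolating segment.

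The main obstacle, and what makes BFF genuinely better than sample--cloning, is upgrading $\E|j|$ to $\E|\d|$ in the remaining estimate; this must be done \emph{before} taking absolute values. I would make three substitutions, each costing only $O(|\smp-\sm|)=O(\se)$ pointwise and hence $O(\e^{3/2})$ after multiplying by $\g\e$: (i) replace $\mu(\smp,\amp)$ by $\mu(\sm,\amp)$ (smoothness of $\mu$ in the state); (ii) condition on $(\sm,\am,\smp)$ and integrate out $\amp\sim\pi(\cdot\vert\smp)$, producing $\E_{a'}[\mu(\sm,a')\vert\smp]$; (iii) replace $\E_{a'}[\mu(\sm,a')\vert\smp]$ by $\E_{a'}[\mu(\sm,a')\vert\sm]$ (smoothness of $\pi$ in the state). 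After (i)--(iii) every factor multiplying $j$ is $\sigma(\sm,\am)$-measurable, so $\E[j(\sm,\am,\smp;\th)\,X]=\E_{(\sm,\am)}[\d(\sm,\am;\th)\,X]$, and the bounds $|\phi'(\sm)|\le C$ and $|\E_{a'}[\mu(\sm,a')\vert\sm]-\mu(\sm,\am)|\le C$ give $|\E[\hF]-\E[F]|\le\g C^2\e\,\E|\d|+o(\e)$, as claimed. For $Q$-control the argument is verbatim with $G$ replaced by $f(s;\th)=\max_{a'}\Qst(s,a';\th)$, so $\phi(s)=\E_Z[\nb_\th f(s+\s\se Z;\th)]$ has $\sup_s|\phi'(s)|\le C$ by \eqref{ass1: cont}, and step (iii) is trivial whenever the behavior policy is state-independent (e.g.\ uniform).

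For the variance bound, the shared first factor again gives $\|\hF\|^2-\|F\|^2=j^2\big(\|\g\nb_\th G(\sm+\dsmp;\th)-\nb_\th\Q\|^2-\|\g\nb_\th G(\smp';\th)-\nb_\th\Q\|^2\big)$, and since both $\sm+\dsmp$ and $\smp'$ lie within $O(\se)$ of $\sm$, a first-order expansion of $\nb_\th G$ about $\sm$ shows the $O(\se)$ contributions are carried by the mean-zero noises $\s\se\zmp$ and $\s\se Z'$, which are independent of the remaining factors and so vanish in expectation, leaving $\E[\|\hF\|^2-\|F\|^2]=O(\e)$. Combining with $\big|\,\|\E[\hF]\|^2-\|\E[F]\|^2\,\big|\le\big(\|\E[\hF]\|+\|\E[F]\|\big)\,\|\E[\hF]-\E[F]\|=O(\e)$ yields $|\V[\hF]-\V[F]|=O(\e)$. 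The extra boundedness hypotheses in the statement enter only here, to make all the expectations above finite and the remainders $O(\e)$.
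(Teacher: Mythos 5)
Your proposal is correct and follows essentially the same route as the paper's proof: both rest on the observations that the fresh Gaussian noise contributes identically (in conditional expectation) to the BFF sample and the independent resample, so the leading-order discrepancy is the drift difference $\mu(\smp,\amp)-\mu(\sm,\am)$, and that the multiplier of $j$ can be made $\sigma(\sm,\am)$-measurable so that $\E|j|$ upgrades to $\E|\d|$. Your packaging—conditioning on $\sigma(\sm,\am,\smp,\amp)$ and integrating the noise exactly into the smoothed function $\phi$—is a tidier way to organize the paper's explicit coefficient-by-coefficient Taylor expansion (and is in fact slightly more careful about the $\pi(a|\smp)$ vs.\ $\pi(a|\sm)$ step), but it is the same computation.
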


\begin{proof}
The expectation of the US gradient is
\begin{equation}
    \label{eq: exp uncor}
    \E[F] = \E[\E[j|\sm,\am]\E[\nb_\th j' |\sm,\am ]] = \E [\d(s, a; \th) \nb_\th\d(s,a;\th)],
\end{equation}
with $j' = j(\sm, \am, \smp;\th)$. The expectation of the BFF gradient is
\begin{equation}
  \label{eq: exp BFF}
  \begin{aligned}
    &\E[\hF] = \E \l[ \E\l[j\nb_\th \hj\vert \sm ,\am \r] \r] =& \E \l[\d(s, a; \th) \E\l[\nb_\th \hj \vert \sm,\am\r] \r],
  \end{aligned}
\end{equation}
with $\hj = j(\sm, \am, \sm+\dsmp;\th)$. By subtracting the two gradients in \eqref{eq: exp uncor}
and \eqref{eq: exp BFF}, we see that the difference between the BFF and US gradients is
\begin{equation}
\label{eq: er_1}
\begin{aligned}
    &\E[\hF] - \E[F]
    =\E\l[ \d(\sm, \am)\E\l[\l. \nb_\th \hj - \nb_\th j' \r\vert\,\sm , \am \r]\r]
\end{aligned}
\end{equation}
For notational convenience, in what follows we drop the explicit dependence of $\Q$ on $\th$. All of
the gradients $\nb$ are taken with respect to $\th$. For ease of exposition, we consider a
one-dimensional state space $\S$. It is straightforward to generalize to the multi-dimensional
case. Using a Taylor expansion, we can expand $\nb\Q(\smp,a)\pi(a\vert\smp)$ around
$\nb\Q(\sm,a)\pi(a\vert\sm)$ by
\begin{equation*}
\begin{aligned}
&\nb\Q(\smp,a)\pi(a\vert\smp) \\
=& \nb\Q(\sm,a)\pi(a\vert\sm) + \pt_s(\nb\Q(\sm,a)\pi(a\vert\sm))\dsm + \frac12\pt_s^2(\nb\Q(\sm,a)\pi(a\vert\sm))\dsm^2. 
\end{aligned}
\end{equation*}
Substituting $\dsm = \mu(\sm,\am)\e + \s\zm\se$ yields \begin{equation}
\label{eq: er_2}
\begin{aligned}
    &\nb_\th j' = \g\int \nb\Q(\smp,a)\pi(a\vert\smp)da  - \nb\Q(\sm,\am)\\
    =&\underbrace{ \g\int \nb\Q(\sm,a)\pi(a\vert\sm)da  - \nb\Q(\sm,\am)}_{f_0}\\
    &+ \underbrace{\l(\g\int \pt_s(\nb\Q(\sm,a)\pi(a\vert,\sm)) da\r) \mu(\sm,\am)}_{f_1}\e \\
    &+ \underbrace{\l(\g\int \pt_s(\nb\Q(\sm,a)\pi(a\vert,\sm))da\r)\s}_{f_2}\zm\se \\
    &+\underbrace{ \l(\g\int \pt^2_s(\nb\Q(\sm,a)\pi(a\vert\sm) )da\r)\s^2}_{f_3} \zm^2\e + o(\e).
\end{aligned}
\end{equation}
Similarly, we can expand $\nb\Q(\sm+\dsmp,a)\pi(a\vert\sm+\dsmp)$ around $\nb\Q(\sm,a)\pi(a\vert\sm)$. This yields
\begin{equation*}
\begin{aligned}
&\nb\Q(\smp,a)\pi(a\vert\smp) \\
=& \nb\Q(\sm,a)\pi(a\vert\sm) + \pt_s(\nb\Q(\sm,a)\pi(a\vert\sm))\dsmp + \pt_s^2(\Q(\sm,a)\pi(a\vert\sm))\dsmp^2.
\end{aligned}
\end{equation*}
By Taylor expanding $\mu(\smp, \amp)$ around $\mu(\sm, \amp)$ and using the fact that $\dsm = O(\se)$, we see that
\begin{align*}
    \mu(\smp,\amp) &= \mu(\sm, \amp) + \pt_s\mu(\sm,\amp)\dsm + O(\dsm^2) \\
    &= \mu(\sm, \amp) + o(1).
\end{align*}
Substituting this into the expression for $\dsmp$ yields
$$\dsmp = \mu(\smp,\amp)\e + \s\zmp\se = \mu(\sm,\amp)\e + \s\zmp\se + o(\e).$$
Combining this expression for $\dsmp$ with the Taylor expansion of $\nb \Q$, we conclude that
\begin{equation}
\label{eq: er_3}
\begin{aligned}
    &\nb_\th \hj = \g\int \nb\Q(\sm+\dsmp,a)\pi(a\vert\sm+\dsmp)da  - \nb\Q(\sm,\am)\\
    = & f_0 + \underbrace{\l(\g\int \pt_s(\nb\Q(\sm,a)\pi(a\vert\sm))da\r)\mu(\sm,\amp)}_{\h{f}_1}\e + f_2\zmp\se+f_3\zmp^2\e+o(\e).
\end{aligned}
\end{equation}
It follows that 
\begin{equation*}
\begin{aligned}
    &\E\l[\l. \nb \hj - \nb j' \r\vert\,\sm , \am \r] \\
    =& \E[(\hf_1 - f_1)\e | \sm, \am] + f_2\E[\zmp - \zm|\sm, \am]\se + f_3\E[\zmp^2 - \zm^2|\sm, \am]\e + o(\e) \\
    =&\E[(\hf_1 - f_1)\e | \sm, \am] \e +  o(\e)\\
    =&\g \l(\int \pt_s(\nb\Q(\sm,a)\pi(a\vert\sm))da\r) \E[\mu(\sm,\amp) - \mu(\sm,\am)|\sm, \am] \e + o(\e) .
\end{aligned}
\end{equation*}
Recall the assumptions of the lemma:
\begin{equation*}
\begin{aligned}
     &\lv\pt_s\E_{a}[\nb_\th \Q(s,a;\th)|s] \rv \leq C, \qd \forall s\in\S, \th\in\R^\dth; \\
     &\lv\E_a[\mu(s,a)|s] - \mu(s,a) \rv \leq C,  \qd \forall s\in\S, a\in\A,.
\end{aligned}
\end{equation*} 
Using these inequalities, we find
\begin{equation*}
\begin{aligned}
    &\E\l[\l. \nb \hj - \nb j' \r\vert\,\sm , \am \r] 
    \leq \g C^2\e + o(\e).
\end{aligned}
\end{equation*}
Substituting the above inequality into \eqref{eq: er_1} finally yields 
\begin{equation*}
\begin{aligned}
    \E[\hF -  F] = \g C^2\E\l[\lv \d(\e+o(\e) )\rv\r]
\end{aligned}
\end{equation*}
which completes the proof for the first part of the lemma.

We now bound the difference of the variance. By the definition of $F, \hF$ in \eqref{def of F eval},
\eqref{def of hF eval}, we have,
\begin{equation*}
\begin{aligned}
     &\lv \V[\hF] - \V[F]\rv \\
     =& \E[j^2((\nb_\th \hj)^2 - (\nb_\th j')^2)] -\l(\E[j\nb_\th \hj]^2 - \E[j\nb_\th j']^2\r)\\
     =&\underbrace{\E\l[\E\l[j^2\vert \sm,\am\r] \E\l[(\nb_\th \hj)^2 - (\nb_\th j')^2\vert \sm,\am\r]\r]}_{I}\\ &-\underbrace{\l(\E[\E[j\vert\sm,\am]\E[\nb_\th \hj\vert\sm,\am]]^2 - \E[\E[j\vert\sm,\am]\E[\nb_\th j'\vert\sm,\am]]^2\r)}_{II}.
\end{aligned}
\end{equation*}
Using the same approximations of $\nb_\th\hj$, $\nb_\th j'$ as in \eqref{eq: er_2}, \eqref{eq: er_3} gives
\begin{equation*}
\begin{aligned}
    \nb_\th \hj - \nb_\th j' = &(f_1 - \hf_1)\e + f_2(\zmp - \zm)\se + f_3(\zmp^2-\zm^2)\e + o(\e)\\
    \nb_\th \hj + \nb_\th j' = &2f_0 + (f_1+\hf_1)\e + f_2(\zmp+\zm)\se + f_3(\zmp^2+\zm^2)\e + o(\e).
\end{aligned}
\end{equation*}
It follows that
\begin{equation*}
\begin{aligned}
    \E[(\nb_\th \hj)^2 - (\nb_\th& j')^2\vert \sm,\am] = \E[(\nb_\th \hj - \nb_\th j')(\nb_\th \hj + \nb_\th j')\vert \sm,\am]\\
    =&\E[2f_0(f_1 - \hf_1)\e + 2f_0f_2(\zmp - \zm)\se + 2f_0f_3(\zmp^2-\zm^2)\e \\
    &+f_2^2(\zmp^2-\zm^2)\e + o(\e)\vert \sm,\am]
    \\
    =&2f_0(f_1 - \hf_1)\e + o(\e),
\end{aligned}
\end{equation*}
Again, using a Taylor expansion, we can approximate $j$ by
\begin{equation}
\label{eq: er_4}
\begin{aligned}
    j =& \underbrace{r + \g\int \Q\pi da  - \Q}_{g_0}+ \underbrace{\l(\pt_s r  +\g\int \pt_s(\Q\pi) da\r)\mu }_{g_1}\e\\
    &+ \underbrace{\l(\pt_sr + \g\int \pt_s(\Q\pi) da \r)\s}_{g_2}\zm\se +\underbrace{ \l(\pt_s^2r + \g\int \pt^2_s(\Q\pi)da\r) \s^2}_{g_3}\zm^2\e  +o(\e),\\
\end{aligned}
\end{equation}
where we abbreviate $r(\sm,\sm,\am), \Q(\sm,a), \pi(a\vert\sm),$ and $\mu(\sm,\am)$ by $r, \Q, \pi,$ and $\mu$, respectively. It follows that
\begin{equation*}
\begin{aligned}
    I = \E\l[j^2\E[(\nb_\th \hj)^2 - (\nb_\th j')^2\vert \sm,\am]\r] = 2\E[g_0^2f_0(f_1 - \hf_1)]\e + o(\e).
\end{aligned}
\end{equation*}
Furthermore, we have
\begin{equation*}
\begin{aligned}
     \underbrace{\E[\nb_\th j' \vert\sm,\am]}_{\circled{1}}  =& f_0 + f_1 \e + f_3\e  + o(\e), \\
    \underbrace{\E[\nb_\th \hj \vert\sm,\am]}_{ \circled{2}} =& f_0 + \hf_1 \e + f_3\e + o(\e), \\
    \underbrace{\E[j\vert\sm,\am]}_{ \circled{3}}=& g_0 + g_1\e + g_3\e + o(\e).
\end{aligned}
\end{equation*}
Combining these expressions shows
\begin{equation*}
\begin{aligned}
    \E[\circled{2} \circled{3} ]^2 =& (\E[f_0g_0] + \E[f_0(g_1 + g_3) + g_0f_3)]\e + \E[g_0\hf_1] \e + o(\e))^2 \\
    =& \E[f_0g_0]^2 +  2\E[f_0g_0]\E[f_0(g_1 + g_3) + g_0f_3)]\e + 2\E[f_0g_0] \E[g_0\hf_1] \e +o(\e) \\
    \E[\circled{1} \circled{3} ]^2 
    =& \E[f_0g_0]^2 +  2\E[f_0g_0]\E[f_0(g_1 + g_3) + g_0f_3)]\e + 2\E[f_0g_0] \E[g_0f_1] \e +o(\e)\\
\end{aligned}
\end{equation*}
which in turn yields
\begin{equation*}
\begin{aligned}
   II =\E[\circled{2} \circled{3} ]^2 - \E[\circled{1} \circled{3} ]^2 = \E[f_0g_0] \E[g_0(\hf_1 - f_1)] \e + o(\e).
\end{aligned}
\end{equation*}
Combining $I$ and $II$, we see that 
\begin{equation*}
\begin{aligned}
     &\lv \V[\hF)] - \V[F]\rv = I - II \leq 2\text{Cov}(f_0g_0, g_0(\hf_1 - f_1)) \e + o(\e) \leq O(\e)
\end{aligned}
\end{equation*}
as long as the covariance of $f_0g_0$ and $g_0(\hf_1 - f_1)$ is bounded. But since $f_0, g_0,\hf_1 - f_1$ are all bounded by the conditions in the second part of the lemma, $\text{Cov}(f_0g_0, g_0(\hf_1 - f_1))$ must be bounded as well. This concludes the proof.
\end{proof}

\begin{lemma}[Extension of Lemma \ref{lemma: diff uc-gd BFF short} for $Q$-control]
\label{lemma: diff gd BFF control}
Let $\dsp f(s;\th) = \max_{a'\in\A} \Qst(s,a;\th)$. Suppose that $f(s; \th)$ is continuous in $s\in\S$ and that
$\pt_sf(s;\th), \pt_s^2f(s;\th)$ exist almost surely. Further assume that $\dsp
\sup_{s\in\S,\th\in\R^\dth}|\pt_s\nb_\th f(\sm;\th) |, \sup_{s\in\S,a\in\A}|\E_a[\mu(s,a) |
  s]-\mu(s,a) |\leq C$ a.s. Then the difference between the gradients in US and BFF
for $Q$-control is bounded by
\begin{equation*}
\begin{aligned}
     &\lv\E[\hF] - \E[F]\rv\leq \g C^2\E\lv \d (\se  + O(\e))\rv,
\end{aligned}
\end{equation*}
In addition, if $\dsp \lv \max_a Q(s,a;\th) - Q(s,a;\th)\rv,\lv \nb_\th\max_a\Q(s,a;\th) - \nb_\th
Q(s,a;\th)\rv,\\ \lv\mu(s,a) - \mu(s,a')\rv, \lv r(s,s,a)\rv \leq C$ almost surely over $s\in \S, a\in\A,
\th\in \R^\dth$, then
\begin{equation*}
\begin{aligned}
     &\lv \V[\hF] - \V[F]\rv \leq   O(\se).
\end{aligned}
\end{equation*}
Here $F, \hF$ are the same as in Lemma \ref{lemma: diff uc-gd BFF} with $j$ and $\delta$ replaced by
\begin{equation}
\begin{aligned}
    j&(\sm,\am,\smp;\th) = r(s_{m+1}, s_m, a_m) + \g \max_a Q^*(s_{m+1}, a; \th) - Q^*(s_m, a_m; \th);\\
    \d &= \E\l[r(s_{m+1}, s_m, a_m) + \g \max_a Q^*(s_{m+1}, a; \th) - Q^*(s_m, a_m; \th) \rvert \sm, \am\r].
\end{aligned}
\end{equation}
\end{lemma}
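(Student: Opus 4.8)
The plan is to rerun the proof of Lemma~\ref{lemma: diff uc-gd BFF}, replacing the policy-averaged integrand $\int\nb\Q(s,a;\th)\pi(a\vert s)\,da$ throughout by $\nb_\th f(s;\th)$ with $f(s;\th)=\max_{a'\in\A}\Qst(s,a';\th)$. Then $j(\sm,\am,\smp;\th)=r(\smp,\sm,\am)+\g f(\smp;\th)-\Qst(\sm,\am;\th)$, $\nb_\th j'=\g\nb_\th f(\smp';\th)-\nb_\th\Qst(\sm,\am;\th)$, $\nb_\th\hj=\g\nb_\th f(\sm+\dsmp;\th)-\nb_\th\Qst(\sm,\am;\th)$, and $\d=\E[j\mid\sm,\am]$. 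Conditioning on $(\sm,\am)$ and using the Markov property as in \eqref{eq: exp uncor}--\eqref{eq: er_1} gives $\E[F]=\E[\d\,\nb_\th\d]$, $\E[\hF]=\E[\d\,\E[\nb_\th\hj\mid\sm,\am]]$, and hence
\[
  \E[\hF]-\E[F]=\g\,\E\big[\d\,\big(\E[\nb_\th f(\sm+\dsmp;\th)\mid\sm,\am]-\E[\nb_\th f(\smp;\th)\mid\sm,\am]\big)\big],
\]
so everything reduces to bounding this inner difference of conditional means. I write the argument for $\S\subset\R$; the general case is identical.

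On the event that the argmax in $\max_{a'}\Qst(\cdot,a';\th)$ is locally unique near $\sm$, $f(\cdot;\th)$ coincides there with a single smooth branch $\Qst(\cdot,a^*;\th)$, and I Taylor-expand $\nb_\th f(\smp;\th)$ in powers of $\dsm=\mu(\sm,\am)\e+\s\zm\se$ and $\nb_\th f(\sm+\dsmp;\th)$ in powers of $\dsmp=\mu(\sm,\amp)\e+\s\zmp\se+o(\e)$ (the latter obtained, as in the proof of Lemma~\ref{lemma: diff uc-gd BFF}, by expanding $\mu(\smp,\amp)$ about $\mu(\sm,\amp)$ and using $\dsm=O(\se)$), reproducing the structure of \eqref{eq: er_2}--\eqref{eq: er_3}. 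The order-$1$ terms cancel; the order-$\se$ terms cancel in conditional expectation since $\E[\zmp-\zm\mid\sm,\am]=0$; and the order-$\e$ quadratic-in-noise terms cancel because their common coefficient factors out and $\E[\zm^2\mid\sm,\am]=\E[\zmp^2\mid\sm,\am]$. What survives is $\g\,\pt_s\nb_\th f(\sm;\th)\,\E[\mu(\sm,\amp)-\mu(\sm,\am)\mid\sm,\am]\,\e+o(\e)$, bounded by $\g C^2\e$ via $\sup|\pt_s\nb_\th f|\le C$ and $\sup|\E_a[\mu(s,a)\mid s]-\mu(s,a)|\le C$. On the complementary event this expansion fails, since $\nb_\th f(\cdot;\th)=\nb_\th\Qst(\cdot,a^*(\cdot);\th)$ jumps by $O(1)$ across each switch of the argmax $a^*(\cdot)$; I bound its contribution by a coupling estimate. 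Given $(\sm,\am)$, the law of $\smp=\sm+\dsm$ is $N(\sm+\mu(\sm,\am)\e,\s^2\e)$ and that of $\sm+\dsmp$ is a mixture of $N(\sm+\mu(\smp,\amp)\e,\s^2\e)$, whose means lie within $O(\e)$ of $\sm+\mu(\sm,\am)\e$ by the action-variation bound and Lipschitzness of $\mu$ in $s$; by convexity of total variation the two laws are then $O(\e/\se)=O(\se)$ apart, so applying this to the bounded function $\nb_\th f$ yields an $O(\se)$ bound on the inner difference of conditional means near the switching set. Combining the two cases gives $\lv\E[\hF]-\E[F]\rv\le\g C^2\,\E\lv\d(\se+O(\e))\rv$.

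The variance bound follows the second half of the proof of Lemma~\ref{lemma: diff uc-gd BFF} essentially verbatim: write $\V[\hF]-\V[F]=I-II$ with $I=\E\big[\E[j^2\mid\sm,\am]\,\E[(\nb_\th\hj)^2-(\nb_\th j')^2\mid\sm,\am]\big]$ and $II$ the associated difference of squared conditional means, and expand $\nb_\th\hj\pm\nb_\th j'$ and $j$ to first order as in \eqref{eq: er_4} (with $\Q\pi$ replaced by $f$, on the locally-unique event above). The leading parts of $I$ and $II$ are then order-$\e$ expectations of products of $g_0=r+\g f-\Qst$, $f_0$ and $f_1-\hf_1$, and their difference collapses to a constant multiple of $\text{Cov}(f_0g_0,\,g_0(\hf_1-f_1))\,\e+o(\e)$, which is $O(\e)$ since $g_0$, $f_0$ and $\hf_1-f_1$ are all bounded under the stated hypotheses ($\lv\max_a\Qst-\Qst\rv$, $\lv\nb_\th\max_a\Qst-\nb_\th\Qst\rv$, $\lv\mu(s,a)-\mu(s,a')\rv$, $\lv r(s,s,a)\rv\le C$). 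Adding the $O(\se)$ contribution from near the switching set (again via the total-variation estimate, now applied to the bounded functions $(\nb_\th f)^2$ and $j$) gives $\lv\V[\hF]-\V[F]\rv\le O(\se)$.

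The main obstacle --- and the reason the leading error here is $O(\se)$ rather than the $O(\e)$ of the evaluation case --- is the $\max$: $f=\max_{a'}\Qst$ is only continuous and piecewise $C^2$ in $s$, so $\nb_\th f$ carries $O(1)$ jump discontinuities along the argmax-switching manifold and the clean Taylor cancellations hold only off that manifold. Making the near-switch estimate rigorous requires (i) boundedness of $\nb_\th f$ (which the hypotheses supply, the jumps being controlled by $\lv\nb_\th\max_a\Qst-\nb_\th\Qst\rv\le C$) and (ii) the total-variation bound $O(\e/\se)$ between the conditional laws of $\smp$ and $\sm+\dsmp$ --- precisely the mechanism by which a step of size $\se$ crossing a jump discontinuity produces an $O(\se)$, not $O(\e)$, error.
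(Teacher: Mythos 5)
Your proposal is correct and its core skeleton --- conditioning on $(\sm,\am)$ to reduce everything to $\E[\nb_\th \hj - \nb_\th j'\mid \sm,\am]$, then Taylor-expanding $\nb_\th f(\smp)$ and $\nb_\th f(\sm+\dsmp)$ so that the $O(1)$, $O(\se)$-noise and $\zm^2$-terms cancel, leaving $\g\,\pt_s\nb_\th f(\sm)\,\E[\mu(\sm,\amp)-\mu(\sm,\am)\mid\sm,\am]\,\e$ --- is exactly the paper's argument, as is your treatment of the variance via the $I-II$ decomposition and the covariance of $f_0g_0$ with $g_0(\hf_1-f_1)$. Where you genuinely diverge is in how the $\max$ is handled. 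The paper simply hypothesizes that $\pt_s f$ and $\pt_s^2 f$ exist almost surely and then Taylor-expands $f=\max_{a'}\Qst(\cdot,a')$ as if it were globally smooth; its proof consequently ends with an $O(\e)$ bound, which does not match the $\se+O(\e)$ and $O(\se)$ appearing in the lemma statement. Your additional ingredient --- splitting off the argmax-switching set and controlling its contribution by the total-variation distance $O(\e/\se)=O(\se)$ between the conditional laws of $\smp$ and $\sm+\dsmp$ (two Gaussian(-mixture) laws with variance $\s^2\e$ and means $O(\e)$ apart), applied to the bounded jump of $\nb_\th f$ --- is not in the paper, and it is precisely the mechanism that produces the $\se$ leading order in the statement. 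In that sense your route is more faithful to what is actually being claimed: it buys an honest account of the piecewise-smoothness of the max, at the cost of two mild extra regularity assumptions you should make explicit (Lipschitz continuity of $\mu$ in $s$, which the paper uses implicitly anyway, and a uniform bound on $\nb_\th f$ itself for the first part of the lemma, since the stated hypotheses only bound $\pt_s\nb_\th f$ and the difference $\nb_\th\max_a\Qst-\nb_\th\Qst$). The paper's route is shorter but leaves the smoothness-of-the-max issue, and the resulting $\e$ versus $\se$ mismatch, unaddressed.
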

\begin{proof}

The difference between the two algorithms is 
\begin{equation*}
\begin{aligned}
    &\E[\hF] - \E[F]
    =\E\l[ \d(\sm, \am)\E\l[\l. \nb_\th j' - \nb_\th \hj \r\vert\,\sm , \am \r]\r]
\end{aligned}
\end{equation*}
where
\begin{equation*}
\begin{aligned}
    &\nb_\th j' = \nb_\th j(\sm, \am, \smp;\th),\qd \nb_\th \hj = \nb_\th j(\sm, \am, \sm+\dsmp;\th)
\end{aligned}
\end{equation*}
and
\begin{equation*}
\begin{aligned}
    &\nb_\th j(\sm, \am, \smp;\th) = \nb_\th \max_{a'\in\A}\Qst(\smp,a';\th) - \nb_\th \Qst(\sm,\am;\th).
\end{aligned}
\end{equation*}
Since we have assumed that $f(s;\th) = \max_{a'\in\A} \Qst(s,a;\th)$
is continuous in $s\in\S$ and that $\pt_sf(s;\th), \pt_s^2f(s;\th)$ exist almost surely, we can write $\nb_\th j$ as
\begin{equation*}
\begin{aligned}
    &\nb_\th j(\sm, \am, \smp;\th) = \nb_\th f(\smp;\th) - \nb_\th \Qst(\sm,\am;\th).
\end{aligned}
\end{equation*}
Similarly to the proof of Lemma \ref{lemma: diff uc-gd BFF}, we use a Taylor expansion:   
\begin{equation}
\label{temp2}
\begin{aligned}
    \nb_\th j' 
    =&\underbrace{ \g \nb f(\sm) - \nb\Q(\sm,\am)}_{f_0}+ \underbrace{\g \pt_s\nb f(\sm) \mu(\sm,\am)}_{f_1}\e \\
    &+ \underbrace{\g\pt_s\nb f(\sm)\s}_{f_2}\zm\se +\underbrace{ \g\pt_s^2\nb f(\sm)\s^2}_{f_3} \zm^2\e + o(\e),\\
    \nb_\th \hj
    =&f_0+ \underbrace{\g \pt_s\nb f(\sm) \mu(\sm,\amp)}_{\hf_1}\e + f_2\zmp\se +f_3 \zmp^2\e + o(\e).
\end{aligned}
\end{equation}
Using the expressions from \eqref{temp2}, we see that
\begin{equation*}
\begin{aligned}
    E[\hF] -  \E[F]=& E\l[\d \E[f_1 - \hf_1| \sm,\am] \r]\e + o(\e)\\
    =& E\l[\d\g \pt_s\nb_\th f(\sm)\l( \E[\mu(\sm,\amp)| \sm,\am] - \mu(\sm,\am)\r)\r]\e + o(\e).
\end{aligned}
\end{equation*}
Since 
\begin{equation*}
\begin{aligned}
    &\E[\mu(\sm,\amp)| \sm,\am] - \mu(\sm,\am) = \int \mu(\sm,a)\pi(a|\smp) da - \mu(\sm,\am) \\
    =& \int \mu(\sm,a)\l(\pi(a|\sm) +\pt_s\pi(a|\sm)(\mu(\sm,\am)\e+\s\zm\se)\r) da - \mu(\sm,\am) + O(\e)\\
    =&\int \mu(\sm,a)\pi(a|\sm)da - \mu(\sm,\am) + O(\se),
\end{aligned}
\end{equation*}
we have
\begin{equation*}
\begin{aligned}
    &E[\hF] - E[F] =\E\l[\d \g\l[\pt_s\nb_\th f(\sm)\l(\int \mu(\sm,a)\pi(a|\sm)da - \mu(\sm,\am)\r)\r]\r]\e + o(\e).
\end{aligned}
\end{equation*}
Since we have additionally assumed that
\begin{equation*}
\begin{aligned}
    &\sup_{s\in\S,\th\in\R^\dth}|\pt_s\nb_\th f(\sm;\th) |, \sup_{s\in\S,a\in\A}|\E_a[\mu(s,a) | s] - \mu(s,a) |\leq C,
\end{aligned}
\end{equation*}
it follows that
\begin{equation*}
\begin{aligned}
    &E[\hF] - E[F] \leq \g C^2\E[\d (\e + o(\e))]
\end{aligned}
\end{equation*}
as desired.

We next bound the difference of the variance. We have 
\begin{equation*}
\begin{aligned}
    &\V[\hF] - \V[F] = \E[j^2((\nb_\th \hj)^2 - (\nb_\th j')^2)] -\l(\E[j\nb_\th \hj]^2 - \E[j\nb_\th j']^2\r).
\end{aligned}
\end{equation*}
Substituting the Taylor expansions of $\nb_\th j', \nb_\th \hj$ from \eqref{temp2}, we obtain
\begin{equation}
\label{temp3}
\begin{aligned}
    j =& \underbrace{r(\sm,\sm,\am) + \g f(\sm)  - \Q(\sm,\am)}_{g_0}+ \underbrace{\l(\pt_s r(\sm)  +\g\pt_sf(\sm)\r)\mu }_{g_1}\e\\
    &+ \underbrace{\l(\pt_s r(\sm) + \g\pt_sf(\sm)\r)\s}_{g_2}\zm\se +\underbrace{ \l(\pt_s^2r(\sm) + \g \pt^2_sf(\sm)\r) \s^2}_{g_3}\zm^2\e  +o(\e).\\
\end{aligned}
\end{equation}
Following steps similar to the proof of Lemma \ref{lemma: diff uc-gd BFF}, we arrive at
\begin{equation*}
\begin{aligned}
    \V[\hF] - \V[F] = O(\e),
\end{aligned}
\end{equation*}
provided that $g_0, f_0, \hf_1 - f_1$ are all bounded. The boundedness of these quantities is
precisely the second set of assumptions in the lemma, so we are done.

\end{proof}

\subsection{Extension and proof of Theorem \ref{thm: diff of pdf short}}
\label{appendix: thm}
\setcounter{equation}{0}
\setcounter{theorem}{0}
\renewcommand\theequation{B.\arabic{equation}}
\renewcommand\thetheorem{B.\arabic{theorem}}

Since the continuous evolution of the parameters satisfies \eqref{eq: SDE}, the p.d.f. of the parameters in the optimization process satisfies the following two equations:
\begin{align}
     \text{Uncorrelated:}\qd &\pt_tp = \nb\cdot\l[ \E [F] p + \frac{\eta}2\nb\cdot\l(\V[F]p\r)\r];\label{eq: pdf uncor}\\
    \text{BFF:}\qd &\pt_t\hp = \nb\cdot\l[ \E [\hF] \hp + \frac{\eta}2\nb\cdot\l(\V[\hF]\hp\r)\r];\label{eq: pdf BFF}
\end{align}
Since $\E[F] = \nb_\th\E[\d^2]$ ($\d$ denotes the Bellman residual) 
and we have assumed $\V[F] \equiv \xi$, it is easy to check that the steady state of \eqref{eq: pdf uncor} is 
\begin{equation}
    \label{def pinf}
    \pinf = \frac1Ze^{- \E[\beta\d^2]}, \qd \b = \frac{2}{\eta\xi},
\end{equation}
where $Z = \int e^{-\b\E[\d^2]}d\th$ is a normalizing constant. The difference of the p.d.f. $\hd = p - \hp$ satisfies
\begin{align}
     &\pt_t\hd = \nb\cdot\l[ \E [F] \hd + \frac{\eta}2\nb\cdot\l(\V[F]\hd\r)\r] + \nb\cdot\l[ \l(\E[F] - \E[\hF]\r)\hp + \frac{\eta}2\nb\cdot\l(\l( \V[F] - \V[\hF]\r)\hp\r)\r].\label{eq: diff BFF}
\end{align}

The proof of Theorem \ref{thm: diff of pdf short} is based on Corollary \ref{coro} and Lemma \ref{lemma: d p^2}, as well as the following assumptions on $\E[\d^2]$ and $\th\in\R^\dth$. We assume that either
\begin{equation}
    \label{two condition}
    \begin{aligned}
      &1)\qd \lim_{|\th|\to\infty} \E[\d^2] \to \infty \qd\text{and}\qd \int e^{-\E[\d^2]} < \infty,\\
      &2)\qd \lim_{|\th|\to\infty} \l(\frac{|\nb\E[\d^2]|}{2} - \D \E[\d^2] \r) = +\infty,
    \end{aligned}
\end{equation}
or $\th\in\O\subset\R^\dth$ 
is in a compact set. These assumptions ensure that the probability measure $\pinf$ satisfies the Poincare inequality
\begin{equation}
  \label{poincare ineq}
  \int f^2 \pinf d\th \leq \lam(\b) \int (\nb f)^2 \pinf d\th, \qd \qd \forall \int fd\th = 0,
\end{equation}
where $\lam(\b)$ is the Poincare constant depending on $\b$. Typically $\lam(\b)$ becomes smaller
as $\b$ becomes larger.

The following two lemmas hold for any function $\d(\th)^2$ on a compact domain
$\th\in\O\subset\R^\dth$, or on an unbounded domain $\th\in\R^\dth$ if $\lim_{|\th|\to\infty} \d(\th)^2\to
+\infty$.
\begin{lemma}
\label{lemma: Gibbs}
Let $f(\th) = \E[\d^2]$ and define $f_\st = \min f(\th)$. Suppose that $f$ has only finitely many discrete minimizers, and that all of the minima are strict. Then there exists a constant $C$ (depending on the Hessian $\nb^2f$ of $f$ at each of the minimizers) such that for $\b$ large enough, 
\begin{equation*}
    \int f(\th) e^{-\b f(\th)} d\th \leq C\l(f_\st\b^{-\frac{\dth}{2}}\r)  + C\l(\b^{-\frac{\dth+2}{2}}\r),
\end{equation*}
where $\dth$ is the dimension of $\th$.
\end{lemma}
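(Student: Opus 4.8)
The statement is a Laplace-method estimate for the Gibbs integral $\int f e^{-\b f}$, so the plan is to localize near the minimizers and reduce everything to Gaussian integrals. Let $\th_\st^{(1)},\dots,\th_\st^{(N)}$ be the minimizers, so $f(\th_\st^{(i)}) = f_\st$ for each $i$ and each Hessian $H_i := \nb^2 f(\th_\st^{(i)})$ is positive definite (this is what ``strict'' buys us, and it is the source of the $\b^{-\dth/2}$ rate). Split the integrand as $f e^{-\b f} = f_\st e^{-\b f} + (f - f_\st) e^{-\b f}$. Since $f = \E[\d^2] \ge 0$, it suffices to prove, for all $\b$ large,
\[ \int e^{-\b f(\th)}\,d\th \;\le\; C\,\b^{-\dth/2} \qquad\text{and}\qquad \int \l(f(\th) - f_\st\r) e^{-\b f(\th)}\,d\th \;\le\; C\,\b^{-(\dth+2)/2}; \]
adding $f_\st$ times the first bound to the second yields the lemma.

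First I would localize. Choose $r>0$ small enough that the balls $B_i := B(\th_\st^{(i)},r)$ are pairwise disjoint and that, by Taylor's theorem at $\th_\st^{(i)}$ (where $\nb f$ vanishes) together with continuity of $\nb^2 f$, there are constants $0 < c_i \le C_i$ depending only on the spectrum of $H_i$ with
\[ c_i\,\lv\th - \th_\st^{(i)}\rv^2 \;\le\; f(\th) - f_\st \;\le\; C_i\,\lv\th - \th_\st^{(i)}\rv^2, \qquad \th\in B_i . \]
On the exterior $E := \R^\dth\setminus\bigcup_i B_i$ (or $E := \O\setminus\bigcup_i B_i$ in the compact-domain case), the minimizers are isolated, so $f>f_\st$ on $E$; combined with $f\to\infty$ at infinity (resp.\ compactness) this upgrades to a uniform gap $f \ge f_\st+\eta_0$ on $E$ for some $\eta_0>0$. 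Choosing $\b_0>1$ (so that $\int e^{-\b_0 f}<\infty$ and $\int f e^{-\b_0 f}<\infty$, both consequences of the standing integrability hypothesis on $\d^2$ together with $f\ge 0$; trivially finite on a compact domain) and using $e^{-(\b-\b_0)f}\le e^{-(\b-\b_0)(f_\st+\eta_0)}\le e^{-(\b-\b_0)\eta_0}$ on $E$, one gets $\int_E e^{-\b f}\le e^{-(\b-\b_0)\eta_0}\int e^{-\b_0 f}$ and $\int_E (f-f_\st)e^{-\b f}\le \int_E f e^{-\b f}\le e^{-(\b-\b_0)\eta_0}\int f e^{-\b_0 f}$ for $\b\ge\b_0$; both are $O(e^{-\eta_0\b})$ and hence negligible next to any power of $\b$.

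For the local pieces, write $e^{-\b f} = e^{-\b f_\st} e^{-\b (f-f_\st)}$ on each $B_i$. The lower bound above and $f_\st\ge 0$ give
\[ \int_{B_i} e^{-\b f}\,d\th \;\le\; e^{-\b f_\st}\int_{\R^\dth} e^{-\b c_i \lv\th\rv^2}\,d\th \;=\; e^{-\b f_\st}\l(\frac{\pi}{\b c_i}\r)^{\dth/2} \;\le\; C\,\b^{-\dth/2}, \]
and the upper bound together with the Gaussian moment $\int_{\R^\dth}\lv\th\rv^2 e^{-\b c_i\lv\th\rv^2}\,d\th = O\l(\b^{-(\dth+2)/2}\r)$ gives $\int_{B_i}(f-f_\st)e^{-\b f}\,d\th \le C\,\b^{-(\dth+2)/2}$. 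Summing over the finitely many minimizers and adding the $O(e^{-\eta_0\b})$ exterior contribution yields the two displayed inequalities, hence $\int f e^{-\b f}\,d\th \le C f_\st\b^{-\dth/2} + C\b^{-(\dth+2)/2}$ for $\b$ large, with $C$ depending only on the $H_i$ (through $c_i, C_i$), on $N$, and on the fixed data $r,\eta_0$.

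The genuinely delicate point is not the Laplace asymptotics near the minimizers --- those are routine --- but the treatment of the exterior region $E$ uniformly in $\b$: one must rule out that $f$ decays too slowly at infinity for $e^{-\b f}$ (and $f e^{-\b f}$) to be integrable, and then extract an exponential-in-$\b$ bound there rather than just an $O(\b^{-1})$ bound, which would swamp the $\b^{-(\dth+2)/2}$ term. The extra factor $\eta_0$ in the gap $f\ge f_\st+\eta_0$ is exactly what provides the exponential decay even when $f_\st=0$, and the hypotheses preceding the lemma ($\d(\th)^2\to\infty$ with $\int e^{-\E[\d^2]}<\infty$, or a compact parameter set $\O$) are what make the splitting $e^{-\b f}=e^{-(\b-\b_0)f}e^{-\b_0 f}$ usable. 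Everything else is bookkeeping to keep all constants depending only on the Hessians and the fixed geometric data.
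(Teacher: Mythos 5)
Your proof is correct, and it is the same underlying method as the paper's (Laplace-type concentration: localize at the finitely many strict minimizers, reduce to Gaussian integrals, show the exterior is negligible), but your execution differs in a way worth noting. The paper Taylor-expands both $f$ and the exponent $e^{-\b f}$ around each minimizer and then must separately control the higher-order remainders, which leads to terms like $e^{\b\v^3}-1$ whose smallness requires a delicate (and not fully spelled out) interplay between the localization radius and $\b$. Your decomposition $f e^{-\b f} = f_\st e^{-\b f} + (f-f_\st)e^{-\b f}$ together with the two-sided quadratic sandwich $c_i\lv\th-\th_\st\rv^2 \le f - f_\st \le C_i\lv\th-\th_\st\rv^2$ on small balls sidesteps that bookkeeping entirely: you never expand the exponential, you only compare it monotonically against Gaussians, so the two rates $\b^{-\dth/2}$ and $\b^{-(\dth+2)/2}$ drop out of $\int e^{-\b c\lv\th\rv^2}$ and $\int \lv\th\rv^2 e^{-\b c\lv\th\rv^2}$ with constants visibly depending only on the Hessians. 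Your treatment of the exterior region is also more careful than the paper's one-line assertion: the uniform gap $f\ge f_\st+\eta_0$ off the balls combined with the splitting $e^{-\b f}=e^{-(\b-\b_0)f}e^{-\b_0 f}$ gives a bound that is exponentially small in $\b$ \emph{uniformly}, which is exactly what is needed for the exterior not to swamp the $\b^{-(\dth+2)/2}$ term. The trade-off is that the paper's expansion, being an asymptotic equality at leading order, would also yield matching lower bounds, whereas your comparison argument only gives the upper bound --- but the upper bound is all the lemma claims.
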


\begin{proof}
See Appendix \ref{sec: proof of Gibbs}. 
\end{proof}

\begin{corollary}
\label{coro}
Suppose that $f(\th)$ has non-strict minima, i.e. minima at which the Hessian is not strictly positive definite. Define
$\dthst$ as
\begin{equation}
    \label{def of dthst}
    \dthst = \min\{\text{number of positive eigenvalues of }\nb_\th^2 f(\th_\st): \th_\st = \argmin f(\th)\}.
\end{equation} 
Then there exists a constant $C$ (depending on the Hessian $\nb^2f$ of $f$ at each of the minimizers) such that
\begin{equation}
\label{higher order}
    \int f(\th) e^{-\b f(\th)} d\th \leq C\l(f_\st\b^{-\frac{\dthst}{2}}\r)  + C\l(\b^{-\frac{\dthst+2}{2}}\r).
\end{equation}
\end{corollary}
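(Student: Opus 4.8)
The plan is to adapt the proof of Lemma \ref{lemma: Gibbs} (a Laplace-method computation), but to carry out the Gaussian reduction only along the non-degenerate directions of the Hessian and to handle the remaining flat directions by crude, dimension-free bounds. First I would pass to the shifted integrand $g := f - f_\st \geq 0$, which attains $\min g = 0$ at the same minimizers $\th_\st$. Since $e^{-\b f} = e^{-\b f_\st}e^{-\b g}$,
\begin{equation*}
\begin{aligned}
  \int f\, e^{-\b f}\,d\th &= e^{-\b f_\st}\l(f_\st\int e^{-\b g}\,d\th + \int g\, e^{-\b g}\,d\th\r)\\
  &\leq f_\st\int e^{-\b g}\,d\th + \int g\, e^{-\b g}\,d\th,
\end{aligned}
\end{equation*}
using $e^{-\b f_\st}\leq 1$. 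So it suffices to prove the two estimates $\int e^{-\b g}\,d\th \leq C\b^{-\dthst/2}$ and $\int g\, e^{-\b g}\,d\th \leq C\b^{-(\dthst+2)/2}$ for all large $\b$.

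For the first estimate I would localize around the zero set $M := \{g = 0\} = \argmin f$. On $\{g \geq \eta_0\}$, for a fixed small $\eta_0 > 0$, the contribution is exponentially small in $\b$: on a compact domain this is immediate, while on $\R^\dth$ one splits off $\{g \geq 1\}$ (where $e^{-\b g}\leq e^{-(\b-1)}e^{-g}$ and $\int e^{-g}<\infty$ by \eqref{two condition}) and the bounded set $\{\eta_0\leq g\leq 1\}$ (where $e^{-\b g}\leq e^{-\b\eta_0}$ and the set has finite measure because the sublevel sets of $f$ are bounded). The set $M$ is compact, so it is covered by finitely many coordinate charts around minimizers $\th_\st$; near each, choose coordinates $\th = (u,w)$ with $u\in\R^{\dthst}$ spanning (at least) the positive-eigenvalue eigenspace of $\nb^2 f(\th_\st)$, so that the partial Hessian $\nb_u^2 g(\th_\st)$ is positive definite. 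By the implicit function theorem there is a smooth map $w\mapsto m(w)$ with $m(w)$ the minimizer of $u\mapsto g(u,w)$ near $u=0$, and a second-order Taylor expansion gives $g(u,w) \geq g(m(w),w) + c\,|u - m(w)|^2 \geq c\,|u-m(w)|^2$ on a small enough ball. Integrating out $u$ yields a factor $O(\b^{-\dthst/2})$ uniformly in $w$, and integrating the bounded $w$-ball gives a constant; summing over the charts proves $\int e^{-\b g}\,d\th \leq C\b^{-\dthst/2}$. (Minimizers whose Hessian has more than $\dthst$ positive eigenvalues only contribute terms of lower order in $\b$.)

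The second estimate then follows without any further Laplace analysis: the elementary inequality $t\,e^{-t}\leq 2e^{-1}\,e^{-t/2}$ with $t = \b g$ gives the pointwise bound $g\,e^{-\b g}\leq \frac{2}{e\b}\,e^{-\b g/2}$, so applying the first estimate with $\b/2$ in place of $\b$,
\begin{equation*}
  \int g\, e^{-\b g}\,d\th \leq \frac{2}{e\b}\int e^{-\b g/2}\,d\th \leq \frac{2}{e\b}\cdot C\l(\frac{\b}{2}\r)^{-\dthst/2} = C'\,\b^{-(\dthst+2)/2}.
\end{equation*}
Combining the two estimates with the reduction above gives \eqref{higher order}, with $C$ depending only on the nonzero eigenvalues of $\nb^2 f$ at the minimizers.

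The step I expect to be the main obstacle is the local analysis near degenerate minimizers, i.e. making rigorous the claim that integrating out the $\dthst$ non-degenerate directions really produces the factor $\b^{-\dthst/2}$ even when $g$ is flat (to first or higher order) in the complementary directions; the naive Taylor lower bound $g\gtrsim|u|^2$ fails there, which is precisely why I route the argument through the implicit function theorem and the reduced minimizer $m(w)$. A secondary point requiring care is the compactness of $M$ (and boundedness of the sublevel sets of $f$) in the unbounded-domain case, which is exactly what assumption \eqref{two condition} supplies and which is needed both for the finite cover of $M$ and for the flat directions to contribute a genuinely finite constant.
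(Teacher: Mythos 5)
Your proposal is correct, and it takes a genuinely different route from the paper. The paper omits the proof of this corollary entirely, asserting that it is ``similar'' to the proof of Lemma \ref{lemma: Gibbs}; but that proof is a direct Laplace expansion that diagonalizes $\nb^2 f(\th_\st)$ and evaluates the Gaussian integrals $\int e^{-\b \pt^2_{\th_i}f(\th_\st)\tth_i^2}d\tth_i = \sqrt{2\pi}(\b\pt^2_{\th_i}f(\th_\st))^{-1/2}$, which degenerate (the factor is infinite) precisely along the zero-eigenvalue directions that the corollary is about, so the extension is not as immediate as the paper suggests. Your argument sidesteps this: you factor out $e^{-\b f_\st}$ to reduce to the two estimates $\int e^{-\b g}\leq C\b^{-\dthst/2}$ and $\int g e^{-\b g}\leq C\b^{-(\dthst+2)/2}$ with $g=f-f_\st\geq 0$; you obtain the first by integrating out only the non-degenerate directions (with the implicit-function-theorem parametrization $m(w)$ supplying the uniform lower bound $g\gtrsim |u-m(w)|^2$, and the flat directions contributing only a bounded volume factor); and you derive the second from the first via the pointwise inequality $te^{-t}\leq (2/e)e^{-t/2}$, avoiding any second-order Laplace analysis. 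What your approach buys is robustness to degeneracy and a cleaner treatment of the higher-order remainder (which in the paper's Lemma \ref{lemma: Gibbs} proof requires the somewhat delicate $e^{\b\v^3}-1$ estimates); what the paper's approach buys, when it applies, is the sharp leading constant $(2\pi)^{\dth/2}\prod_i(\pt^2_{\th_i}f(\th_\st))^{-1/2}$, which your dimension-free bounds do not recover but which the corollary does not claim. Two minor points to tidy: at a minimizer whose Hessian has $k>\dthst$ positive eigenvalues you should take $u\in\R^{k}$ (not $\R^{\dthst}$) and then use $\b^{-k/2}\leq\b^{-\dthst/2}$ for $\b\geq 1$, as your parenthetical already indicates; and the reduction $e^{-\b f_\st}\leq 1$ uses $f_\st\geq 0$, which holds here because $f=\E[\d^2]$.
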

\begin{proof}
The proof of the corollary is similar to the proof of Lemma \ref{lemma: Gibbs}, so we omit it here.
\end{proof}

\begin{remark}\label{rmk: dthst}
  Note that the bound \eqref{higher order} depends on $\dthst$, \emph{not} the parameter dimension
  $\dth$. When the dimension of the parameter space is high (i.e. when $\dth$ is large), it is more
  likely that there are many minima which are flat in some direction (i.e. $\nb_\th^2 f(\th_\st) $
  is a positive \emph{semi}-definite matrix at these minima). In the above, $\dthst$ denotes the
  smallest number of positive eigenvalues of $\nb_\th^2 f(\th_\st)$ among all minima. As a result,
  when the dimension $\dth$ becomes larger, the upper bound $\b^{-\frac{\dthst}{2}}$ does not
  necessarily become smaller.
  
  Furthermore, if $f_\st= 0$ (i.e. there exists $\th_\st$ such that $\Q(s,a;\th_\st)$ exactly satisfies the
  Bellman equation) then
  \begin{equation*}
    \int f(\th) e^{-\b f(\th)} d\th \leq  C\l(\b^{-\frac{\dthst+2}{2}}\r).
  \end{equation*}
\end{remark}

\begin{lemma}
\label{lemma: d p^2}
The solution to \eqref{eq: pdf uncor} (i.e. the approximate p.d.f. of US)
\begin{equation*}
  \begin{aligned}
    \int \E[\d^2] \frac{(p(t,\th)-\pinf)^2}{\pinf} d\th  \leq  C_0  e^{-b(\b)t},
  \end{aligned}
\end{equation*}
where $C_0$ is a constant depending on the initial data $(p(0,\th) - \pinf)$,
$b(\b)=\frac{2\lam(\b)^2}{C+\lam(\b)}$ with Poincare constant $\lam(\b)$ and $\dsp C
=\sup_{a,s}|\nb\d(a,s) |^2$. In addition,
\begin{equation*}
  \begin{aligned}
    \int \E[\d^2] \frac{p^2(t,\th)}{\pinf} d\th  \leq  C_0  e^{-b(\b)t} + O\l(\E[\d^2_\st]\b^{-\frac{\dthst}{2}}\r),
  \end{aligned}
\end{equation*}
where $\lam(\b)$ is the Poincare constant defined in (\ref{poincare ineq}), $\E[\d^2_\st] =
\min_{\th} \E[\d^2]$, and $\dthst$ is defined in \eqref{def of dthst} for $f = \E[\d^2]$.
\end{lemma}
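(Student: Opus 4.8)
\emph{Proof proposal.} Write $f(\th) := \E[\d^2]$ and $u := p - \pinf$, and pass to the relative density $v := u/\pinf$. Since $\nb\pinf = -\b\,\nb f\,\pinf$, equation \eqref{eq: pdf uncor} is equivalent to $\pt_t u = \tfrac1\b\,\nb\cdot(\pinf\,\nb v)$, an operator that is self-adjoint and nonpositive on $L^2(1/\pinf)$; moreover conservation of mass gives $\int v\,\pinf\,d\th = \int u\,d\th = 0$ for all $t$. The plan is to run a \emph{weighted} energy estimate for $W(t) := \int f\,v^2\,\pinf\,d\th = \int\E[\d^2]\,(p-\pinf)^2/\pinf\,d\th$, and then deduce the second inequality from the first by a short expansion.

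First I would differentiate $W$ in time and integrate by parts, which gives the identity $\dot W = -\tfrac2\b\int f\,|\nb v|^2\,\pinf\,d\th - \tfrac2\b\int v\,(\nb f\cdot\nb v)\,\pinf\,d\th$. The first term is dissipative, the second is the obstruction. What makes the second term tractable is that $\nb f = \nb\E[\d^2] = 2\,\E[\d\,\nb\d]$, so Cauchy--Schwarz gives $|\nb f|^2 \le 4\,\E[\d^2]\,\E[|\nb\d|^2] \le 4C\,f$ with $C = \sup_{s,a}|\nb\d|^2$ as in the statement. Hence by Cauchy--Schwarz and Young's inequality the cross term is bounded in absolute value by $\tfrac1\b\int f\,|\nb v|^2\,\pinf + \tfrac{4C}\b\int v^2\,\pinf$, so that $\dot W \le -\tfrac1\b\int f\,|\nb v|^2\,\pinf + \tfrac{4C}\b\int v^2\,\pinf$: half the dissipation survives, at the price of a lower-order term governed by the plain weighted norm $\|u\|_\st^2 = \int v^2\,\pinf$.

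To close the estimate I would combine two facts. First, the Poincar\'{e} inequality \eqref{poincare ineq} for $\pinf$ (which holds under the hypotheses \eqref{two condition}), applied to $v$, shows that $E(t) := \int v^2\,\pinf$ obeys $\dot E = -\tfrac2\b\int|\nb v|^2\,\pinf \le -\tfrac{2}{\b\lam(\b)}E$, hence $E(t)\le E(0)\,e^{-2t/(\b\lam(\b))}$ --- this is the classical exponential decay of $\|p-\pinf\|_\st$. Second, a \emph{weighted} coercivity bound $W \le \Lambda\int f\,|\nb v|^2\,\pinf + K\,E$, obtained by applying the Poincar\'{e} inequality to the probability measure proportional to $f\,\pinf$ (whose confining potential $\b f - \log f$ still satisfies \eqref{two condition} when $\b$ is large) and then correcting its mean --- nonzero because $v$ is centered against $\pinf$ rather than against $f\,\pinf$ --- via Cauchy--Schwarz in terms of $\operatorname{Var}_{\pinf}(f)$ and $E$; when $\th$ ranges over a compact set, $f$ is bounded and this step is immediate with $K = 0$. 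Substituting the coercivity bound into the inequality above produces a scalar differential inequality $\dot W \le -\,b(\b)\,W + c\,E(0)\,e^{-2t/(\b\lam(\b))}$, and Gr\"{o}nwall then gives $W(t)\le C_0\,e^{-b(\b)t}$, where $C_0$ is controlled by $W(0)$, hence by the initial datum $p(0,\th)-\pinf$; carrying the constants through the Young and Poincar\'{e} steps is what yields the stated rate $b(\b) = \tfrac{2\lam(\b)^2}{C+\lam(\b)}$. This establishes the first inequality.

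For the second, write $p^2 = (p-\pinf)^2 + 2(p-\pinf)\pinf + \pinf^2$, so that $\int f\,p^2/\pinf\,d\th = W + 2\int f\,(p-\pinf)\,d\th + \int f\,\pinf\,d\th$. The middle term is at most $2\bigl(\int f\,\pinf\,d\th\bigr)^{1/2}W^{1/2}$ by Cauchy--Schwarz, hence $O(e^{-b(\b)t/2})$, and is absorbed into $C_0\,e^{-b(\b)t}$ after enlarging $C_0$; the last term equals $\tfrac1Z\int\E[\d^2]\,e^{-\b\E[\d^2]}\,d\th$, which is exactly the integral bounded in Lemma \ref{lemma: Gibbs} and Corollary \ref{coro}, yielding the residual $O\bigl(\E[\d^2_\st]\,\b^{-\dthst/2}\bigr)$. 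The step I expect to be the main obstacle is the cross term in the energy identity: getting exponential decay for the \emph{weighted} norm, rather than for the classical $\|\cdot\|_\st$ norm (which is textbook), forces one to combine the gradient identity $|\nb\E[\d^2]|^2 \le 4C\,\E[\d^2]$ with the weighted-Poincar\'{e} step, and it is precisely there that the confinement hypotheses \eqref{two condition} on $\E[\d^2]$ --- which guarantee that the associated Gibbs-type measures have a spectral gap --- are indispensable.
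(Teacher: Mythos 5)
Your overall strategy is the same as the paper's: an energy estimate for the weighted quantity $W(t)=\int \E[\d^2]\,d^2/\pinf\,d\th$ with $d=p-\pinf$, closed by a Poincar\'{e}-type coercivity bound and Gr\"{o}nwall, plus the Gibbs-type Lemma \ref{lemma: Gibbs}/Corollary \ref{coro} to handle the steady-state contribution $\int \E[\d^2]\pinf\,d\th$ in the second inequality. Your treatment of the second inequality (exact expansion of $p^2$ with a Cauchy--Schwarz cross term, versus the paper's $p^2\le 2d^2+2(\pinf)^2$) is a harmless variant.

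The genuine gap is exactly the step you flag as the main obstacle: the weighted coercivity bound $W\le \Lambda\int f\,|\nb v|^2\pinf + K\,E$ obtained from a Poincar\'{e} inequality for the measure proportional to $f\,\pinf$ with $f=\E[\d^2]$. This is not a consequence of the paper's hypotheses \eqref{two condition}/\eqref{poincare ineq}, which only give a spectral gap for $\pinf$ itself, and it is genuinely delicate in the most relevant regime $f_\st=0$: near a minimizer $\th_\st$ the weight behaves like $f e^{-\b f}\sim|\th-\th_\st|^2$, i.e.\ it degenerates quadratically, so the existence and $\b$-dependence of its Poincar\'{e} constant $\Lambda$ would need a separate argument, and the stated rate $b(\b)=\frac{2\lam(\b)^2}{C+\lam(\b)}$ would not come out of your constants. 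The paper sidesteps this entirely with a pointwise algebraic identity: for each fixed $(s,a)$, writing $v=d/\pinf$, one has $\nb v\cdot\nb(\d^2 v)=\lv\nb(\d v)\rv^2-v^2\lv\nb\d\rv^2$, so after integrating against $\pinf$ and averaging over $(s,a)$ the weighted Dirichlet form becomes a complete square $\int\pinf\lv\nb(\d d/\pinf)\rv^2$ minus the controlled term $C\ll d\rl_\st^2$ with $C=\sup_{s,a}\lv\nb\d\rv^2$. The single Poincar\'{e} inequality \eqref{poincare ineq} already assumed for $\pinf$ is then applied to $\d d/\pinf$ to give $-\lam W+C\ll d\rl_\st^2$, and the Lyapunov function $W+(C/\lam+1)\ll d\rl_\st^2$ decays at the stated rate. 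If you replace your Young-inequality-plus-weighted-Poincar\'{e} step with this identity, your argument closes; as written, the key coercivity input is asserted rather than proved.
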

\begin{proof}
See Appendix \ref{sec: proof of d p^2}.
\end{proof}

We remark that all the above lemmas and theorems work for both $\O = \R^\dth$ and compact domains
$\O$. However, the following theorem holds only for compact $\O$. Therefore,
we need a reflective boundary condition for the PDEs \eqref{eq: pdf uncor}, \eqref{eq: pdf BFF}, i.e.
\begin{equation*}
    \l.\l(\E[F]p + \frac\eta2\nb\l(\V[F]p\r) \r) \cdot \vec{n} \r\vert_{\pt \O} = 0,
\end{equation*}
and similar boundary conditions for $\hp$. It is not clear whether the compactness assumption can be
removed; we leave it for future study. In practice, the BFF algorithm still works for
unconstrained domains.

\begin{theorem}
\label{thm: diff of pdf}
The difference $\hd$ between the p.d.f.s of the US and BFF algorithms is bounded by
\begin{equation}
\label{eq: diff of pdf}
\begin{aligned}
  \ll \hd(t) \rl_\st \leq& \ll p(0) - \pinf \rl_\st e^{-\frac{\lam(\b)}4t} +
  O(\e)e^{-\frac{b(\b)}2t} + O\l(\e\sqrt{\E[\d_\st^2]}\b^{-\frac{\dthst}{4}}\r)
  \sqrt{1-e^{-\frac{\lam(\b)}2t}},
\end{aligned}
\end{equation}
where $\lam(\b)$ is the Poincare constant defined in (\ref{poincare ineq}), $b(\b)$ is the same
constant as in Lemma \ref{lemma: d p^2}, $\E[\d^2_\st] = \min_{\th} \E[\d^2]$, and $\dthst$ is
defined in \eqref{def of dthst} for $f = \E[\d^2]$.
\end{theorem}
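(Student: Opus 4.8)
The plan is to run a weighted $L^2$ energy estimate for the density difference $\hd=p-\hp$, taking the stationary density $\pinf$ of the US Fokker--Planck equation \eqref{eq: pdf uncor} (see \eqref{def pinf}) as the weight. Set $h:=\hd/\pinf$ and note that $\int\hd\,d\th=0$, since $p$ and $\hp$ are both probability densities, so $h$ is admissible in the Poincar\'e inequality \eqref{poincare ineq}. The right-hand side of \eqref{eq: diff BFF} is the sum of the US generator $L$ applied to $\hd$ --- for which $\pinf$ is stationary precisely because $\V[F]\equiv\xi$ --- and the source $S=\nb\cd\big[(\E[F]-\E[\hF])\hp+\tfrac{\eta}{2}\nb\cd((\V[F]-\V[\hF])\hp)\big]$. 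Pairing \eqref{eq: diff BFF} with $h$, integrating by parts (the reflecting boundary condition removes boundary terms), and using that the Dirichlet form of $L$ in this weighted norm equals $\tfrac{\eta\xi}{2}\int|\nb h|^2\pinf\,d\th$, I obtain a relation of the form
\[
\begin{aligned}
  \tfrac12\tfrac{d}{dt}\!\int\!\frac{\hd^2}{\pinf}\,d\th &= -\tfrac{\eta\xi}{2}\!\int\!|\nb h|^2\pinf\,d\th \\
  &\quad - \int\!\Big[(\E[F]-\E[\hF])\hp+\tfrac{\eta}{2}\nb\cd\big((\V[F]-\V[\hF])\hp\big)\Big]\!\cd\nb h\,d\th .
\end{aligned}
\]
By \eqref{poincare ineq} (valid under \eqref{two condition} and in particular on a compact $\O$) the first term is strictly dissipative, $\le -c\,\lam(\b)^{-1}\!\int\hd^2/\pinf\,d\th$.

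Next I would control the source. A weighted Cauchy--Schwarz inequality followed by Young's inequality lets me absorb a copy of $\tfrac{\eta\xi}{4}\int|\nb h|^2\pinf$ into the dissipation, leaving a term $\lesssim \eta^{-1}\!\int|\vec J|^2/\pinf\,d\th$, where $\vec J$ is the vector field inside $S$. Here Lemma~\ref{lemma: diff uc-gd BFF} ($Q$-evaluation) and Lemma~\ref{lemma: diff gd BFF control} ($Q$-control) enter: pointwise in $\th$, $|\E[F]-\E[\hF]|\le O(\e)\,\E_\rho|\d|\le O(\e)\sqrt{\E[\d^2](\th)}$ and $|\V[F]-\V[\hF]|=O(\e)$ (with $\nb(\V[F]-\V[\hF])=O(\e)$ from the same hypotheses), so $\int|\vec J|^2/\pinf\,d\th\le O(\e^2)\!\int\E[\d^2](\th)\,\hp^2/\pinf\,d\th$ up to terms carrying an extra factor of $\eta$ from the $\tfrac{\eta}{2}$ in front of the variance piece. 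I then split $\hp=p-\hd$: the $p$-part is exactly the quantity bounded in Lemma~\ref{lemma: d p^2}, $\int\E[\d^2]\,p^2/\pinf\,d\th\le C_0e^{-b(\b)t}+O\big(\E[\d_\st^2]\b^{-\dthst/2}\big)$, where Corollary~\ref{coro} supplies the flat-minimum exponent $\dthst$; and on the compact $\O$ the continuous function $\E[\d^2](\th)$ is bounded, say by $M$, so the $\hd$-part is $\le M\!\int\hd^2/\pinf\,d\th$.

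Combining, I get a scalar differential inequality $\tfrac{d}{dt}\mathcal E(t)\le-(\kappa-O(\e^2M))\mathcal E(t)+O(\e^2)\big(C_0e^{-b(\b)t}+\E[\d_\st^2]\b^{-\dthst/2}\big)$ for $\mathcal E(t)=\int\hd(t)^2/\pinf\,d\th$, with $\kappa$ proportional to $\eta\xi/\lam(\b)$. For $\e$ small enough the coefficient is at least $\kappa/2$, and the integrating factor $e^{\kappa t/2}$ gives $\mathcal E(t)\le\mathcal E(0)e^{-\kappa t/2}+O(\e^2)\int_0^t e^{-\frac{\kappa}{2}(t-\tau)}\big(C_0e^{-b(\b)\tau}+\E[\d_\st^2]\b^{-\dthst/2}\big)\,d\tau$. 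The first integral is $O\big(e^{-\min(\kappa/2,\,b(\b))t}\big)$ and the second is $O\big(\E[\d_\st^2]\b^{-\dthst/2}\big)(1-e^{-\kappa t/2})$; taking square roots and renaming $\kappa,b(\b)$ to the Poincar\'e and Lemma~\ref{lemma: d p^2} constants $\lam(\b),b(\b)$ produces the three terms of \eqref{eq: diff of pdf}. (The initial term carries $\ll p(0)-\pinf\rl_\st$ rather than $\ll\hd(0)\rl_\st$ once one writes $\hd=(p-\pinf)-(\hp-\pinf)$ and uses that US and BFF share an initialization, or that the BFF reference density starts at $\pinf$.)

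I expect the source estimate to be the main obstacle, for two reasons. First, Lemma~\ref{lemma: d p^2} is about the US density $p$, not $\hp$, so the split $\hp=p-\hd$ is forced and it feeds $\mathcal E(t)$ back into the right-hand side; closing the loop needs that reinjected term to carry a small coefficient, which holds only because $\E[\d^2]$ is bounded on $\O$. This is the structural reason the theorem --- unlike the per-step Lemmas~\ref{lemma: diff uc-gd BFF} and \ref{lemma: diff gd BFF control} --- requires a compact parameter domain together with a reflecting boundary condition. Second, the variance-difference piece of $S$ contains a derivative of $\hp$; handling it requires either one more integration by parts paired with the $O(\e)$ bound on $\nb(\V[F]-\V[\hF])$, or a parabolic-regularity (relative Fisher information) bound on $\nb p$ weighted by $1/\pinf$ --- the explicit $\tfrac{\eta}{2}$ keeps this contribution subleading in $\eta$, so it does not affect the decay rate or the $O(\e)$ size of the final bound, but it must still be controlled.
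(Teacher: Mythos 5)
Your proposal follows essentially the same route as the paper: a weighted $L^2$ energy estimate for $\hd/\pinf$ against the US generator, the Poincar\'e inequality \eqref{poincare ineq} for dissipation, the per-step Lemmas \ref{lemma: diff uc-gd BFF}/\ref{lemma: diff gd BFF control} to size the source, the split $\hp=p-\hd$ so that Lemma \ref{lemma: d p^2} and Corollary \ref{coro} control the $p$-part while the $\hd$-part is absorbed into the dissipation, and a Gronwall integration yielding the three terms of \eqref{eq: diff of pdf}. The one step you leave as an alternative --- controlling the relative Fisher information $\int\lv\nb((p-\pinf)/\pinf)\rv^2\pinf\,d\th$ arising from the variance-difference piece --- is resolved in the paper by adding the energy inequality \eqref{eq: proof d 1} for $d=p-\pinf$ to \eqref{eq: proof hd 1} and letting the $d$-equation's own Dirichlet form absorb it, which is exactly the second option you sketch.
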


\begin{proof}
  Observe that
  \begin{equation*}
    \begin{aligned}
    \E[\hF]\hp - \E[F]p &= \E[F]\hd + (\E[\hF] -\E[ F])\hd + (\E[\hF]-\E[F])p \\
    &= \E[F]\hd + E[\d O(\e)]\hd + E[\d O(\e)]p.
    \end{aligned}
\end{equation*}
Similarly, we have
\begin{equation*}
    \begin{aligned}
    \V[\hF]\hp - \V[F]p =  \V[F]\hd + O(\e)\hd + O(\e)p,
    \end{aligned}
\end{equation*}
Multiplying \eqref{eq: diff BFF} by $\frac{\hd}{\pinf}$, then integrating with respect to $\th$, we
have
\begin{equation*}
    \begin{aligned}
    \frac12\pt_t\ll \hd \rl^2_\st =& \underbrace{-\int \l[ \pinf \nb\l(\frac{\hd}{\pinf}\r) \r] \cdot \nb\l(\frac{\hd}{\pinf}\r)d\th }_{I}
    \underbrace{-\int \l( \E[\d O(\e)]\hd  + O(\eta\e)\lv \nb\hd\rv \r) \cdot \nb\l(\frac{\hd}{\pinf}\r) d\th}_{II}\\
    &\underbrace{ -\int \l(\E[\d O(\e)]p + O(\eta\e)\lv\nb p\rv\r) \cdot\nb\l(\frac{\hd}{\pinf}\r) d\th}_{III}.
    \end{aligned}
\end{equation*}
We proceed by bounding the terms $I-III$ separately. First, note that
\begin{equation*}
  \begin{aligned}
    I = -\int \l[\nb\l(\frac{\hd}{\pinf}\r) \r]^2\pinf d\th  \leq  -\frac12\int \l[\nb\l(\frac{\hd}{\pinf}\r) \r]^2\pinf d\th -\frac{\lam}2 \ll \hd \rl^2_\st,
  \end{aligned}
\end{equation*}
where we have used the Poincare inequality \eqref{poincare ineq}. For the second term, we have 
\begin{equation*}
  \begin{aligned}
    II \leq&  O(\e) \int \lv\hd\rv \lv\nb\l(\frac{\hd}{\pinf}\r) \rv d\th + O(\e\eta)\int \lv\nb\l(\frac{\hd}{\pinf}\pinf\r)\rv \lv\nb\l(\frac{\hd}{\pinf}\r)\rv d\th \qd \text{(boundedness of}\E\d)\\
    \leq&  O(\e)\ll \hd \rl^2_\st+\frac18\int \l[\nb\l(\frac{\hd}{\pinf}\r) \r]^2\pinf d\th + O(\e\eta)\int \lv\nb\l(\frac{\hd}{\pinf}\r)\rv^2\pinf d\th \\
    &+ O(\e\eta)\int \lv \b\E[\d\nb\d]\hd \rv \lv\nb\l(\frac{\hd}{\pinf}\r)\rv d\th \hspace{1.85in} \text{(Cauchy-Schwartz Inequality)}\\
    \leq &O(\e)\ll \hd \rl^2_\st+\frac14\int \l[\nb\l(\frac{\hd}{\pinf}\r) \r]^2\pinf d\th + O(\e\eta)\int \lv\nb\l(\frac{\hd}{\pinf}\r)\rv^2\pinf d\th+ O(\e^2\eta^2\b^2)\ll \hd \rl^2_\st.
  \end{aligned}
\end{equation*}
Since $\eta^2\b^2 = O(1)$, $O(\e^2\eta^2\b^2) = O(\e^2)$. This yields 
\begin{equation*}
    \begin{aligned}
    II \leq & O(\e)\ll \hd \rl^2_\st+\l(\frac14 + O(\e\eta)\r)\int \l[\nb\l(\frac{\hd}{\pinf}\r) \r]^2\pinf d\th.
    \end{aligned}
\end{equation*}
For the last term, by the Cauchy Schwartz inequality, 
\begin{equation*}
    \begin{aligned}
    III \leq&  O(\e^2) \int \E[\d^2] \frac{p^2}{\pinf} d\th +  \frac{1}{16}\int \lv\nb\l(\frac{\hd}{\pinf}\r) \rv^2\pinf d\th + O(\e\eta)\int \lv\nb\l(\frac{p}{\pinf}\pinf\r)\rv \lv\nb\l(\frac{\hd}{\pinf}\r)\rv d\th \qd \\
    \leq&  O(\e^2) \int \E[\d^2] \frac{p^2}{\pinf} d\th  + O(\e^2\eta^2)\int \lv\nb\l(\frac{p}{\pinf}\r)\rv^2\pinf d\th +\frac2{16}\int \l[\nb\l(\frac{\hd}{\pinf}\r) \r]^2\pinf d\th\\
    &+O(\e^2\eta^2\b^2)\int \E[\d^2|\nb\d|^2]\frac{p^2}{\pinf} d\th + \frac{1}{16}\int \lv\nb\l(\frac{\hd}{\pinf}\r) \rv^2\pinf d\th\\
    \leq &  O(\e^2) \int \E[\d^2] \frac{p^2}{\pinf} d\th  + O(\e^2\eta^2)\int \lv\nb\l(\frac{p}{\pinf}\r)\rv^2\pinf d\th +\frac3{16}\int \l[\nb\l(\frac{\hd}{\pinf}\r) \r]^2\pinf d\th.
    \end{aligned}
\end{equation*}
Combining the above three terms, we have
\begin{equation*}
    \begin{aligned}
    \frac12\pt_t \ll \hd \rl^2_\st \leq& \l(\frac1{16}- O(\e\eta)\r) \int \l[\nb\l(\frac{\hd}{\pinf}\r) \r]^2\pinf d\th - \l(\frac\lam2 - O(\e)\r)\ll \hd \rl^2_\st \\
    &+ O(\e^2)\int \E[\d^2] \frac{p^2}{\pinf} d\th  + O(\e^2\eta^2)\int \lv\nb\l(\frac{p}{\pinf}\r)\rv^2\pinf d\th .
    \end{aligned}
\end{equation*}
As long as $\e, \eta$ are small enough, and using the fact that $\nb\l(\frac{\pinf}{\pinf}\r) = 0$, we have 
\begin{equation}
\label{eq: proof hd 1}
    \begin{aligned}
    \frac12\pt_t \ll \hd \rl^2_\st \leq&  - \frac\lam4\ll \hd \rl^2_\st + O(\e^2)\int \E[\d^2] \frac{p^2}{\pinf} d\th  + O(\e^2\eta^2)\int \lv\nb\l(\frac{p - \pinf}{\pinf}\r)\rv^2\pinf d\th . 
    \end{aligned}
\end{equation}
Setting $d = p - \pinf$, it is easy to see that $d$ also satisfies \eqref{eq: pdf
  uncor}. Multiplying \eqref{eq: pdf uncor} by $\frac{p}{\pinf}$ and integrating with respect to
$\th$, we have
\begin{equation}
\label{eq: proof d 1}
    \begin{aligned}
    \frac12\pt_t \ll d \rl^2_\st =&  - \int \lv\nb\l(\frac{d}{\pinf}\r)\rv^2\pinf d\th \leq -\frac12
    \int \lv\nb\l(\frac{d}{\pinf}\r)\rv^2\pinf d\th - \frac\lam2\ll d \rl^2_\st . 
    \end{aligned}
\end{equation}
Adding equations \eqref{eq: proof hd 1} and \eqref{eq: proof d 1} gives
\begin{equation*}
    \begin{aligned}
  \frac12\pt_t \l(\ll \hd \rl^2_\st + \ll d \rl^2_\st \r)  \leq&  - \frac\lam4\l(\ll \hd \rl^2_\st + \ll d \rl^2_\st \r)  - \frac\lam4 \ll d\rl^2_\st  + O(\e^2)\int \E[\d^2] \frac{p^2}{\pinf} d\th \\
    \pt_t \l[e^{\frac\lam2t}\l(\ll \hd \rl^2_\st + \ll d \rl^2_\st \r)\r] 
    \leq& e^{\frac\lam2t}O(\e^2)\int \E[\d^2] \frac{p^2}{\pinf} d\th.  \\
    \end{aligned}
\end{equation*}
Applying Lemma \ref{lemma: d p^2}, we have
\begin{equation*}
    \begin{aligned}
    \pt_t \l[e^{\frac\lam2t}\l(\ll \hd \rl^2_\st + \ll d \rl^2_\st \r)\r] 
    \leq& O(\e^2)e^{\frac\lam2t} e^{-bt} + O\l(\e^2\E[\d^2_\st]\b^{-\frac{\dthst}{2}}\r)e^{\frac\lam2t} .     \end{aligned}
\end{equation*}
We then integrate the above inequality on both sides to obtain
\begin{equation*}
    \begin{aligned}
    &\l(\ll \hd(t) \rl^2_\st + \ll d(t) \rl^2_\st \r)  \\
    \leq& e^{-\frac\lam2t}\l(\ll \hd(0) \rl^2_\st + \ll d(0) \rl^2_\st \r) + O(\e^2)(e^{-bt} + e^{-\frac\lam2t}) + O\l(\e^2\E[\d^2_\st]\b^{-\frac{\dthst}{2}}\r) (1-e^{-\frac\lam2t}).
    \end{aligned}
\end{equation*}
Since $\hd(0) = 0$, the above inequality is equivalent to
\begin{equation*}
    \begin{aligned}
    \ll \hd(t) \rl^2_\st  \leq& e^{-\frac\lam2t}\ll d(0) \rl^2_\st + O(\e^2)e^{-bt}  + O\l(\e^2\E[\d^2_\st]\b^{-\frac{\dthst}{2}}\r) (1-e^{-\frac\lam2t})
    \end{aligned}
\end{equation*}
as desired.
\end{proof}

\subsubsection{Proof of Lemma \ref{lemma: Gibbs}}
\label{sec: proof of Gibbs}
\begin{proof}
For the unbounded domain, since $\lim_{|\th|\to\infty} f(\th) = +\infty$ and $\lim_{f\to+\infty}f
e^{-\b f} = 0$, there always exists a compact domain $\O = \{|\th| \leq M\}$ such
that $$\int_{\R^\dth\backslash\O} f(\th) e^{-\b f(\th) }d\th \leq
O\l(f(\th_\st)\b^{-\frac{\dth}{2}}\r) + O\l(\b^{-\frac{\dth+2}{2}}\r).$$

We can divide $\O$ into $\{\O_i\}_{i=1}^k$ such there is only one minimizer $\th_\st$ in each
$\O_i$, or else $f(\O_i) \equiv 0$. For this latter case, it is trivial to see that $\int_{\O_i}
f(\th) e^{-\b f(\th)} = 0$.

For the former case, notice that the integral can be separated into two parts, 
\begin{equation*}
\begin{aligned}
    &\int_{\O_1} f(\th) e^{-\b f(\th)} d\th  = \int_{|\th - \th_\st|\leq \v}f(\th) e^{-\b f(\th)} d\th  + \int_{\O_1\backslash|\th - \th_\st|\leq \v}f(\th) e^{-\b f(\th)} d\th. \\
\end{aligned}
\end{equation*}
For any $\v>0$, we can choose $\b$ large enough that the second integral will be smaller than
$O(\b^{-\frac{\dth+2}{2}})$. Since $\th_\st$ is a minimizer, $\nb f(\th_\st) = 0$ and we have
\begin{equation}
\label{proof1_0}
\begin{aligned}
    &\int_{\O_1} f(\th) e^{-\b f(\th)} d\th\\
    =& \int_{|\th - \th_\st|\leq \v}  \l(f(\th_\st)+ (\th-\th_\st)^\top\nb^2f(\th_\st)(\th-\th_\st) + O(|\th - \th_\st|^3)  \r)\\
    &\exp\l( -\b f(\th_\st) -\b (\th-\th_\st)^\top \nb^2f(\th_\st)(\th-\th_\st) -\b O(|\th - \th_\st|^3) \r)d\th + O(\b^{-\frac{\dth+1}{2}})\\
    =&f(\th_\st)\exp(-\b f(\th_\st)) \int_{|\th - \th_\st|\leq \v} \exp\l(-\b (\th-\th_\st)^\top \nb^2f(\th_\st)(\th-\th_\st) \r)d\th \\
    &+\exp(-\b f(\th_\st)) \int_{|\th - \th_\st|\leq \v} (\th-\th_\st)^\top\nb^2f(\th_\st)(\th-\th_\st)\exp\l(-\b (\th-\th_\st)^\top \nb^2f(\th_\st)(\th-\th_\st) \r)d\th\\
    &+ (\text{higher order terms in }\b).
\end{aligned}
\end{equation}
We will prove later the higher order terms are all smaller than $O(\b^{-\frac{\dth+1}{2}})$. Without
loss of generality, we assume $\nb^2f(\th_\st)$ is a diagonal matrix. (If it is not, we can simply
perform a change of basis.) Since $\th_\st$ is a local minimum, $\pt_{\th_i}^2f(\th_\st)>0$. Then
after making the change of variables $\tth = \th - \th_\st$, we have
\begin{equation}
\label{proof1_1}
\begin{aligned}
    &\int_{\O_1} f(\th) e^{-\b f(\th)} d\th \\
    =& f(\th_\st)\exp(-\b f(\th_\st))\int_{\O_1}  \prod_i\exp\l(-\b\pt^2_{\th_i}f(\th_\st) \tth_i^2\r) d\tth_1\cdots d\tth_\dth\\
    &+ \exp(-\b f(\th_\st))\int_{\O_1} \l(\sum_i\pt^2_{\th_i}f(\th_\st) \tth_i^2\r) \prod_i\exp\l(-\b\pt^2_{\th_i}f(\th_\st) \tth_i^2\r) d\tth_1\cdots d\tth_\dth + O(\b^{-\frac{\dth+1}{2}}).
\end{aligned}
\end{equation}
Since 
\begin{equation*}
\begin{aligned}
    &\int_\R \exp\l(-\b\pt^2_{\th_i}f(\th_\st) \tth_i^2\r) d\th_i = \sqrt{2\pi}(\b\pt^2_{\th_i}f(\th_\st))^{-1/2},\\
    &\int_\R \tth_i^2\exp\l(-\b\pt^2_{\th_i}f(\th_\st) \tth_i^2\r) d\th_i = \sqrt{2\pi}(\b\pt^2_{\th_i}f(\th_\st))^{-3/2},
\end{aligned}
\end{equation*}
we have,
\begin{equation*}
\begin{aligned}
    \int_{\R^\dth}  \prod_i\exp\l(-\b\pt^2_{\th_i}f(\th_\st) \tth_i^2\r) d\tth_1\cdots d\tth_\dth &= (2\pi)^{\dth/2}\b^{-\dth/2}\prod_i \l(\pt_{\th_i}^2f(\th_\st)\r)^{-1/2} \\
    &= O\l(\b^{-\frac{\dth}{2}}\r);\\
    \int_{\R^\dth} \th_i^2\prod_i\exp\l(-\b\pt^2_{\th_i}f(\th_\st) \tth_i^2\r) d\tth_1\cdots d\tth_\dth &= (2\pi)^{\dth/2}\b^{-\dth/2-1}(\pt^2_{\th_i}f(\th_\st))^{-1}\prod_j(\pt^2_{\th_j}f(\th_\st))^{-1/2} \\
    &=O\l(\b^{-\frac{\dth+2}{2}}\r).
\end{aligned}
\end{equation*}
Plugging the above estimate back to \eqref{proof1_1} and recalling that $\th_\st$ is the only
minimizer in $\O_1$, we have
\begin{equation}
\label{proof1_2}
\begin{aligned}
    &\int_{\O_1} f(\th) e^{-\b f(\th)} d\th \leq O\l(f(\th_\st)\b^{-\frac{\dth}{2}}\r)  + O\l(\b^{-\frac{\dth+2}{2}}\r). 
\end{aligned}
\end{equation}
Now we will estimate the higher order terms in \eqref{proof1_0},
\begin{equation*}
\begin{aligned}
    &(\text{higher order terms in }\b)\\ 
    =&f(\th_\st)\exp(-\b f(\th_\st)) \int_{|\tth|\leq \v} \underbrace{\exp\l(-\b \tth^\top \nb^2f(\th_\st)\tth \r)}_{\leq 1}\underbrace{\l(e^{-\beta O(|\tth|^3)} - 1\r)}_{\leq e^{\b\v^3} - 1}d\th \\
    &+\exp(-\b f(\th_\st)) \int_{|\tth|\leq \v} \underbrace{\tth^\top\nb^2f(\th_\st)\tth\exp\l(-\b \tth^\top \nb^2f(\th_\st)\tth\r)}_{\leq \v^2\max_i\{\pt_{\th_i}^2f(\th_\st)\}}\l(e^{-\beta O(|\tth|^3)} - 1\r)d\th\\
    &+ \exp(-\b f(\th_\st)) \int_{|\tth|\leq \v} \underbrace{O(|\tth|^3)}_{\leq \v^3}\underbrace{\exp\l(-\b \tth^\top \nb^2f(\th_\st)\tth-\beta O(|\tth|^3 )\r)}_{\leq 1} d\th\\
    \leq &O\l(\text{vol}\l(\{|\th|\leq \v\}\r) \l((e^{\b\v^3 - 1}) + \v^3\r)\r). 
\end{aligned}
\end{equation*}
From the above estimates, we see that as long as $\v$ is small enough, the higher order terms are
smaller than $O\l(\b^{-\frac{\dth_\st+2}2}\r)$. Since we assumed that the number of discrete
minimizers is finite, this completes the proof.

\end{proof}

\subsubsection{Proof of Lemma \ref{lemma: d p^2}}
\label{sec: proof of d p^2}
\begin{proof}
Setting $d = p - \pinf$, it is easy to see that $d$ also satisfies \eqref{eq: pdf
  uncor}. Multiplying \eqref{eq: pdf uncor} by $\E[\d^2]\frac{d}{\pinf}$ and then integrating with
respect to $\th$, we have
\begin{equation*}
    \begin{aligned}
    \frac12\pt_t\int \E[\d^2] \frac{d^2}{\pinf} d\th  \leq&  -\int \pinf\nb\l(\frac{d}{\pinf}\r) \nb\l( \E[\d^2] \frac{d}{\pinf}\r)\\
    =&  -\int \int \pinf\l[\nb\l(\frac{d}{\pinf}\r) \nb\l( \d^2 \frac{d}{\pinf}\r)\r] d\th d\mu(s,a)\\
    =& -\int \int \pinf\l[\l(\nb\l(\frac{\d d}{\pinf}\r)\r)^2 - \l(\frac{d}{\pinf}\r)^2(\nb\d)^2\r] d\th d\mu(s,a)\\
    \leq&-\int \int \pinf\l(\nb\l(\frac{\d d}{\pinf}\r)\r)^2d\th d\mu(s,a) +C\int \int\l(\frac{d}{\pinf}\r)^2\pinf d\th d\mu(s,a)\\
    \leq & -\lam \int \int \frac{(\d d)^2}{\pinf} d\th d\mu(s,a) + C \int \frac{d^2}{\pinf }d\th d\mu(s,a)\\
    \leq & -\lam\int \E[\d^2] \frac{d^2}{\pinf} d\th + C\ll d \rl^2_\st.
    \end{aligned}
\end{equation*}
Using the fact that $\frac12\pt_t\ll d \rl^2_\st \leq -\lam \ll d\rl^2_\st$, we have
\begin{equation*}
    \begin{aligned}
    \frac12\pt_t\l[\int \E[\d^2] \frac{d^2}{\pinf} d\th + \l(\frac{C}{\lam} + 1\r)\ll d \rl^2_\st\r]  
    \leq & -\lam\l(\int \E[\d^2] \frac{d^2}{\pinf} d\th  + \ll d\rl^2_\st\r)\\
    \leq &-\frac{\lam^2}{C+\lam} \l[\int \E[\d^2] \frac{d^2}{\pinf} d\th + \l(\frac{C}{\lam} + 1\r)\ll d \rl^2_\st\r].
    \end{aligned}
\end{equation*}
By Grownwall's inequality, 
\begin{equation*}
    \begin{aligned}
    \l[\int \E[\d^2] \frac{d(t)^2}{\pinf} d\th + \l(\frac{C}{\lam} + 1\r)\ll d(t) \rl^2_\st\r]  
    \leq &e^{-\frac{2\lam^2}{C+\lam}t} \l[\int \E[\d^2] \frac{d(0)^2}{\pinf}d\th + \l(\frac{C}{\lam} + 1\r)\ll d(0) \rl^2_\st\r] ,\\
    \int \E[\d^2] \frac{d(t)^2}{\pinf} d\th  
    \leq & e^{-\frac{2\lam^2}{C+\lam}t} \l[\int \E[\d^2] \frac{d(0)^2}{\pinf}d\th + \l(\frac{C}{\lam} + 1\r)\ll d(0) \rl^2_\st\r].
    \end{aligned}
\end{equation*}
which completes the first part of the proof. 

While the second part of the Lemma is obtained by inserting $p^2 = (d+\pinf)^2 \leq 2d^2 +
2(\pinf)^2$ into the following equation,
\begin{equation*}
    \begin{aligned}
    \int \E[\d^2] \frac{p(t)^2}{\pinf} d\th \leq 2\int \E[\d^2] \frac{d(t)^2}{\pinf} d\th  + 2\int \E[\d^2] \pinf d\th 
    = & C_0e^{-\frac{2\lam^2}{C+\lam}t}  + O(\b^{-\frac{\dth+2}{2}}),
    \end{aligned}
\end{equation*}
where Lemma \ref{lemma: Gibbs} is applied to the last equality. 

\end{proof}

\subsection{Difference between SC and US}
\label{sec: ds ub}
\setcounter{equation}{0}
\setcounter{theorem}{0}
\renewcommand\theequation{C.\arabic{equation}}
\renewcommand\thetheorem{C.\arabic{theorem}}

The SC parameter update is given by
\begin{equation*}
  \th_{m+1} = \th_m - \eta \tF, \qd \tF = j(\sm, \am, \smp;\th_m) \nb_\th j(\sm, \am, \smp;
  \th_m).
\end{equation*}
The definition of $j$ depends on whether we are doing $Q$-evaluation or $Q$-control:
\begin{equation*}
\begin{aligned}
  \text{$Q$-evaluation:}\qd j(\sm, \am, \smp;\th_m) = &r(\smp,\sm,\am)+\g\int \Q(\smp,a;\th)\pi(a\vert\smp)da\ \\
  &- \Q(\sm, \am;\th);\\
  \text{$Q$-control:}\qd j(\sm, \am, \smp;\th_m) =& r(\smp,\sm,\am)+\g\max_a \Q(\smp,a;\th)  - \Q(\sm, \am;\th).
\end{aligned}
\end{equation*}
The expectation of the SC gradient $\tF$ at each step is
\begin{equation}
    \label{eq: exp ds}
\begin{aligned}
    \E[\tF] = \E &\l[\E\l[ j \nb_\th j\vert \sm = s,\am = a\r] \r], 
\end{aligned}
\end{equation}
which is the gradient of the following loss function
\begin{equation}
    \label{eq: loss ds}
    \tJ(\th) = \frac12\E \l[\E\l[ \l. j^2 \r\vert \sm = s,\am = a\r] \r].
\end{equation}
Note that this is not the same as the desired objective function $J(\th) = \frac12\E[(\E[j|\sm,\am])^2]$.

\subsubsection{Difference at each step} 
In Lemmas \ref{lemma: diff uc-gd sc-gd} and \ref{lemma: control diff uc-gd sc-gd}, we prove that the
difference between the gradients used in US and SC is $O(\e)$. The constants hidden by the big-$O$
depend on the square of the diffusion $\sigma^2$. In practice, this means that SC will not converge
to a good approximation for $\Q$.

\begin{lemma}
  \label{lemma: diff uc-gd sc-gd}
  Suppose that $$\dsp \sup_{s\in\S,\th\in\R^\dth}\lv\pt_s\E_{a}[ \Q(s,a;\th)|s] \rv,
  \sup_{s\in\S,\th\in\R^\dth}\lv\pt_s\E_{a}[\nb_\th \Q(s,a;\th)|s] \rv, \dsp\sup_{s\in\S,a\in\A}
  \lv\pt_{s'}r(s,s,a) \rv \leq C $$ almost surely. Then the difference between the gradients in the
  US and SC algorithms is bounded by
  \begin{equation*}
    \begin{aligned}
      &\lv\E[\tF] - \E[F]\rv \leq 2\s^2C^2\e + o(\e);
    \end{aligned}
  \end{equation*}
  In addition, if $\lv \E_a [\Q(s,a;\th)] - Q(s,a;\th)\rv,\lv \E_a [\nb_\th\Q(s,a;\th)] - \nb_\th
  Q(s,a;\th)\rv, \lv r(s,s,a)\rv \leq C$ almost surely in $\forall s\in \S, a\in\A, \th\in \R^\dth$, then
  the difference between the variances is bounded by
  \begin{equation*}
    \begin{aligned}
      &\lv \V[\tF] - \V[F]\rv \leq   O(\e),
    \end{aligned}
  \end{equation*}
  where $\tF, F, \s$ are defined in \eqref{eq: exp ds} \eqref{eq: exp uncor} and \eqref{def of transition} respectively.
\end{lemma}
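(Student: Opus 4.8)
The plan is to reuse the Taylor-expansion machinery from the proof of Lemma~\ref{lemma: diff uc-gd BFF} (equations \eqref{eq: er_2} and \eqref{eq: er_4}), the only conceptual difference being that the bias of SC is a \emph{conditional covariance} rather than a conditional-mean discrepancy, and hence does not factor through $\d$. Write $j=j(\sm,\am,\smp;\th)$ and $\nb_\th j=\nb_\th j(\sm,\am,\smp;\th)$ for the trajectory sample, and $\nb_\th j'$ for the gradient evaluated at an independent next state $\smp'$. Since $\smp$ and $\smp'$ are conditionally independent given $(\sm,\am)$ and differentiation commutes with the conditional expectation,
\begin{equation*}
\E[F]=\E\bigl[\E[j\mid\sm,\am]\,\E[\nb_\th j'\mid\sm,\am]\bigr]=\E[\d\,\nb_\th\d],\qquad \E[\tF]=\E\bigl[\E[j\,\nb_\th j\mid\sm,\am]\bigr],
\end{equation*}
so that
\begin{equation*}
\E[\tF]-\E[F]=\E\bigl[\E[j\,\nb_\th j\mid\sm,\am]-\E[j\mid\sm,\am]\,\E[\nb_\th j\mid\sm,\am]\bigr]=\E\bigl[\text{Cov}(j,\nb_\th j\mid\sm,\am)\bigr].
\end{equation*}
This identity is the crux; everything else is expansion and estimation.

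Next I would substitute $\dsm=\mu(\sm,\am)\e+\s\zm\se$ into the expansions of $j$ and $\nb_\th j$ about $\sm$. From \eqref{eq: er_2} and \eqref{eq: er_4},
\begin{equation*}
j=g_0+g_1\e+g_2\zm\se+g_3\zm^2\e+o(\e),\qquad \nb_\th j=f_0+f_1\e+f_2\zm\se+f_3\zm^2\e+o(\e),
\end{equation*}
with $g_2=\bigl(\pt_{s'}r+\g\!\int\!\pt_s(\Q\pi)\,da\bigr)\s$ and $f_2=\bigl(\g\!\int\!\pt_s(\nb\Q\,\pi)\,da\bigr)\s$. Using $\E[\zm\mid\sm,\am]=0$ and $\E[\zm^2\mid\sm,\am]=1$, one computes $\E[j\mid\sm,\am]=g_0+(g_1+g_3)\e+o(\e)$, $\E[\nb_\th j\mid\sm,\am]=f_0+(f_1+f_3)\e+o(\e)$, and $\E[j\,\nb_\th j\mid\sm,\am]=g_0f_0+\bigl(g_0f_1+f_0g_1+g_0f_3+f_0g_3+g_2f_2\bigr)\e+o(\e)$; subtracting, all terms cancel except $\text{Cov}(j,\nb_\th j\mid\sm,\am)=g_2f_2\,\e+o(\e)$. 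Since $\int\pt_s(\Q\pi)\,da=\pt_s\E_a[\Q\mid s]$ and $\int\pt_s(\nb\Q\,\pi)\,da=\pt_s\E_a[\nb_\th\Q\mid s]$, the hypotheses give $|g_2|\le(1+\g)C\s$ and $|f_2|\le\g C\s$, hence $|g_2f_2|\le\g(1+\g)\,\s^2C^2\le 2\s^2C^2$ because $\g\in(0,1)$. Therefore $|\E[\tF]-\E[F]|\le\E|g_2f_2|\,\e+o(\e)\le 2\s^2C^2\e+o(\e)$, which is the first claim.

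For the variance, split $\V[\tF]-\V[F]=\bigl(\E[\tF^2]-\E[F^2]\bigr)-\bigl(\E[\tF]^2-\E[F]^2\bigr)$. The second bracket equals $(\E[\tF]-\E[F])(\E[\tF]+\E[F])=O(\e)$ by the first part together with the boundedness of $\E[\tF]$ and $\E[F]$, which is exactly what the extra hypotheses $|\E_a\Q-Q|,|\E_a\nb_\th\Q-\nb_\th Q|,|r(s,s,a)|\le C$ guarantee (as in the variance part of Lemma~\ref{lemma: diff uc-gd BFF}, they make $g_0,f_0$ bounded). For the first bracket, conditional independence again gives $\E[F^2]=\E[\E[j^2\mid\sm,\am]\,\E[(\nb_\th j')^2\mid\sm,\am]]$, so $\E[\tF^2]-\E[F^2]=\E[\text{Cov}(j^2,(\nb_\th j)^2\mid\sm,\am)]$. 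Squaring the expansions, the dominant fluctuation in each of $j^2$ and $(\nb_\th j)^2$ is the $O(\se)$ term; since $\text{Cov}(\zm,\zm^2)=0$ and the $\zm^2\e$ fluctuations enter the covariance only at order $\e^2$, one obtains $\text{Cov}(j^2,(\nb_\th j)^2\mid\sm,\am)=4g_0g_2f_0f_2\,\e+o(\e)=O(\e)$ by the same boundedness, whence $|\V[\tF]-\V[F]|=O(\e)$.

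The main obstacle is organizational rather than conceptual: one must keep careful track of which Taylor coefficients survive under the conditional expectations --- in particular, the $O(\se)$ noise terms vanish on their own but combine into the leading $O(\e)$ term via the cross-product $g_2f_2\,\e$ --- and one must check that the bounds on $g_2$ and $f_2$ multiply (using $\g(1+\g)\le 2$) to the exact constant $2\s^2C^2$ in the statement. No analytic ingredient beyond those already used for Lemma~\ref{lemma: diff uc-gd BFF} is required; the $\s^2$ prefactor simply records that, in contrast with the BFF bias, the SC bias originates entirely from the diffusion and never decreases with $\d$.
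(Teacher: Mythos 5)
Your proposal is correct and follows essentially the same route as the paper's proof: the difference $\E[\tF]-\E[F]$ is identified as the expected conditional covariance of $j$ and $\nb_\th j$, the Taylor expansions \eqref{eq: er_2} and \eqref{eq: er_4} isolate the surviving cross term $g_2f_2\e$, and the variance bound reduces to the same $4g_0g_2f_0f_2\e$ leading term. The only (harmless) divergence is that you dispatch the $\E[\tF]^2-\E[F]^2$ bracket via the first part of the lemma rather than re-expanding it as the paper does; both yield $O(\e)$ under the stated boundedness assumptions.
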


\begin{proof}
The proof is similar to the proof of Lemma \ref{lemma: diff uc-gd BFF}. Subtracting \eqref{eq: exp
  uncor} from \eqref{eq: exp ds} yields
\begin{equation}
\label{eq: error for sample-cloning}
\begin{aligned}
  &\E\l[ \tF - F \r]  = \E\l[\E[j \l( \nb j - \E[\nb j\vert\sm,\am] \r) \vert \sm,\am]\r].
\end{aligned}
\end{equation}
By the approximation of $\nb j$ in \eqref{eq: er_2}, we have
\begin{equation*}
  \begin{aligned}
    \nb j  - \E[\nb j |\sm,\am]= &f_2\zm\se + f_3(\zm^2 - 1)\e + o(\e).
  \end{aligned}
\end{equation*}
Combining this with the approximation of $j$ in \eqref{eq: er_4}  gives
\begin{equation*}
\begin{aligned}
    &\E\l[ \tF - F \r]= \E[g_2f_2\e] + o(\e) \\
    =& \g\E\l[\pt_s r \int \pt_s(\nb\Q\pi)da\r]\s^2\e + \g^2\E\l[\int \pt_s(\Q\pi)da\int \pt_s(\nb\Q\pi)da\r]\s^2\e.
\end{aligned}
\end{equation*}
Therefore, as long as, 
\begin{equation}
\label{cond_11}
\begin{aligned}
     &\pt_s\l(\E_{a}\nb \Q(s,a;\th)\r) \leq C, \qd \forall s\in\S, \th,\\
     &\pt_s\l(\E_{a} \Q(s,a;\th)\r) \leq C, \qd \forall s\in\S, \th,\\
     &\pt_{s'}r(s',s,a) \leq C, \qd \forall s',s\in\S, a\in\A,
\end{aligned}
\end{equation}
then the difference between the gradients is bounded by
\begin{equation*}
  \begin{aligned}
    \lv \E[\tF] - \E[F]\rv \leq 2\g\s^2C^2\e + o(\e)
  \end{aligned}
\end{equation*}
as desired.

Next, we bound the difference of the variance. We have
\begin{equation}
\label{eq: var2_0}
\begin{aligned}
    &\lv \V[\tF] - \V[F]\rv = \E[j^2((\nb j)^2 - (\nb j') ^2)] - \l(\E[j\nb j]^2 - \E[j\nb j' ]^2\r)\\
     = &\underbrace{ \E[\E[j^2\l((\nb j)^2 - \E[(\nb j)^2|\sm,\am]]  \r)| \sm,\am]}_{I} \\
     &- \underbrace{\l(\E[\E[j\nb j|\sm,\am] ]^2 - \E[\E[j|\sm,\am]\E[\nb j|\sm,\am]]^2\r)}_{II}.
\end{aligned}
\end{equation}
Using the approximations for $\nb j$, $j$ in \eqref{eq: er_2}, \eqref{eq: er_4}, we have 
\begin{equation*}
\begin{aligned}
    & \underbrace{\E[(\nb j)^2|\sm,\am] - (\nb j )^2 }_{\circled{1}}\\
    = &\E[f_0^2 + 2f_0f_2\zm\se + 2f_0f_1\e + (2f_0f_3+f_2^2)\zm^2\e  + o(\e)|\sm,\am]\\
    &- \l(f_0^2 + 2f_0f_2\zm\se + 2f_0f_1\e + (2f_0f_3+f_2^2)\zm^2\e  + o(\e)\r)\\
    =& -2f_0f_2\zm\se + (2f_0f_3+f_2^2)(1-\zm^2)\e  + o(\e);\\
    &\E[j^2 \circled{1} | \sm,\am]\\
    =& \E\l[ \l(g_0^2 + 2g_0g_2\zm\se + 2g_0g_1\e + (2g_0g_3+g_2^2)\zm^2\e  + o(\e) \r)\circled{1}| \sm,\am\r]\\
    =& -4g_0g_2f_0f_2\e + o(\e).
\end{aligned}
\end{equation*}
It follows that
\begin{equation}
\label{eq: var2_1}
\begin{aligned}
    &I =- \E[\E[j^2 \circled{1} | \sm,\am]]= 4\e\E[g_0g_2f_0f_2] + o(\e).
\end{aligned}
\end{equation}
Furthermore, we have
\begin{equation*}
\begin{aligned}
    &\underbrace{\E[j|\sm,\am]}_{\circled{2}} = g_0 +(g_1 + g_3)\e + o(\e),\\
    &\underbrace{\E[\nb j | \sm, \am]}_{\circled{3}}= f_0 + (f_1 + f_3)\e + o(\e),\\
    &\E[\circled{2}\circled{3}]^2 = (\E[g_0f_0] + \E[(g_0(f_1 +f_3)+f_0(g_1+g_3))\e] + o(\e))^2 \\
    =& \E[g_0f_0]^2 +2 \E[g_0f_0]\E[(g_0(f_1 +f_3)+f_0(g_1+g_3))]\e  + o(\e),
\end{aligned}
\end{equation*}
and
\begin{equation*}
\begin{aligned}
    &\underbrace{\E[j\nb j | \sm, \am] }_{\circled{4}}\\
    =& \E[f_0g_0 + f_0g_1\e + f_0g_2\zm\se + f_0g_3\zm^2\e + f_1g_0\e + f_2g_0\zm\se + f_2g_2\zm^2\e \\
    &+ g_0f_3\zm^2\e| \sm, \am] + o(\e)\\
    =&f_0g_0 + (f_0(g_1+g_3) + g_0(f_1 + f_3))\e + f_2g_2\e + o(\e)\\
    &\E[\circled{4}]^2 = \l(\E[f_0g_0] + \E[(f_0(g_1+g_3) + g_0(f_1 + f_3))]\e + \E[f_2g_2]\e + o(\e)\r)^2 \\
    =& \E[f_0g_0]^2 + 2\E[f_0g_0]\E[(f_0(g_1+g_3) + g_0(f_1 + f_3))]\e + 2\E[f_0g_0]\E[f_2g_2]\e + o(\e).
\end{aligned}
\end{equation*}
Combining these shows that
\begin{equation}
\label{eq: var2_2}
\begin{aligned}
    &II = \E[\circled{4}]^2  - \E[\circled{2}\circled{3}]^2 = 2\E[f_0g_0]\E[f_2g_2]\e + o(\e).
\end{aligned}
\end{equation}
Finally, substituting \eqref{eq: var2_1} and \eqref{eq: var2_2} into \eqref{eq: var2_0} gives, 
\begin{equation}
\label{temp1}
\begin{aligned}
    &\lv \V[\tF] - \V[F]\rv = 2\e\l( 2\E[f_0g_0f_2g_2]- \E[f_0g_0]\E[f_2g_2]\r) + o(\e).
\end{aligned}
\end{equation}
A sufficient condition for $2\E[f_0g_0f_2g_2]- \E[f_0g_0]\E[f_2g_2]$ to be bounded is that $f_0,g_0,
f_2, g_2$ are all bounded. The condition \eqref{cond_11} guarantees $f_2,g_2$ are bounded, and since
$f_0 = \g \E_a[\nb_\th \Q(\sm, a)] - \Q(\sm, \am)$, $g_0 = r(\sm, \sm, \am) + \g \E_a[\Q(\sm,a)] -
\Q(\sm, \am)$, as long as
\begin{equation*}
\begin{aligned}
     &\lv\E_{a}[\nb \Q(s,a;\th)] - \Q(s,a)\rv\leq C, \qd \forall s\in\S, a\in\A, \th\in\R^\dth;\\
     &\lv\E_{a} [\Q(s,a;\th)]  - \Q(s,a;
    \th)\rv\leq C, \qd \forall s\in\S, a\in\A, \th\in\R^\dth;\\
     &\lv r(s,s,a) \rv \leq C, \qd \forall s\in\S, a\in\A;
\end{aligned}
\end{equation*}
then the coefficient of $\e$ is bounded. This implies that $\lv \V[\tF] - \V[F]\rv = O(\e)$, completing the proof.
\end{proof}

\begin{lemma}
  \label{lemma: control diff uc-gd sc-gd}
  Let $\dsp f(s;\th) = \max_{a'\in\A} \Qst(s,a;\th)$. Suppose that $f(s; \th)$ is continuous in $s\in\S$ and
  $\pt_sf(s;\th)$, $\pt_s^2f(s;\th)$ exist almost surely. Further assume that
  $\dsp\sup_{s\in\S,\th\in\R^\dth}\lv\pt_s f(\sm;\th)\rv$,
  $\sup_{s\in\S,\th\in\R^\dth}\lv\pt_s\nb_\th f(\sm;\th)\rv$, $\dsp\sup_{s\in\S,a\in\A}
  \lv\pt_{s'}r(s,s,a) \rv \leq C $ a.s. Then the difference between the gradients in the US and SC
  algorithms is bounded by
  \begin{equation*}
    \begin{aligned}
      &\lv\E[\tF] - \E[F]\rv \leq 2\s^2C^2\e + o(\e),
    \end{aligned}
  \end{equation*}
  In addition, if $\dsp \lv \max_a Q(s,a;\th) - Q(s,a;\th)\rv,\lv \nb_\th\max_a\Q(s,a;\th) - \nb_\th
  Q(s,a;\th)\rv, \lv r(s,s,a)\rv \leq C$ almost surely in $s\in \S, a\in\A, \th\in \R^\dth$, then
  \begin{equation*}
    \begin{aligned}
      &\lv \V[\tF] - \V[F]\rv \leq O(\e^2).
    \end{aligned}
  \end{equation*}
\end{lemma}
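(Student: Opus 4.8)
The plan is to run the proof of Lemma~\ref{lemma: diff uc-gd sc-gd}---the $Q$-evaluation analogue---essentially verbatim, replacing the $Q$-evaluation Taylor expansions by the $Q$-control ones already derived in the proof of Lemma~\ref{lemma: diff gd BFF control}. With $f(s;\th)=\max_{a'\in\A}\Qst(s,a';\th)$ and all gradients in $\th$, these read
\begin{equation*}
  \nb_\th j = f_0 + f_1\e + f_2\zm\se + f_3\zm^2\e + o(\e), \qd j = g_0 + g_1\e + g_2\zm\se + g_3\zm^2\e + o(\e),
\end{equation*}
with $f_0 = \g\nb_\th f(\sm) - \nb_\th\Qst(\sm,\am)$, $f_2 = \g\,\s\,\pt_s\nb_\th f(\sm)$, $g_0 = r(\sm,\sm,\am) + \g f(\sm) - \Qst(\sm,\am)$, and $g_2 = \s\big(\pt_s r(\sm) + \g\,\pt_s f(\sm)\big)$. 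Because $\zm$ is $\th$-independent, $\nb_\th j$ is literally the $\th$-gradient of the expansion of $j$, so $f_i=\nb_\th g_i$; only the forms of $f_0,f_2,g_0,g_2$ are needed below.

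\textbf{Gradient difference.} Subtracting \eqref{eq: exp uncor} from \eqref{eq: exp ds} exactly as in \eqref{eq: error for sample-cloning} gives $\E[\tF]-\E[F]=\E\big[\E[\,j\,(\nb_\th j-\E[\nb_\th j\mid\sm,\am])\mid\sm,\am]\big]$. Substituting the expansion, $\nb_\th j-\E[\nb_\th j\mid\sm,\am]=f_2\zm\se+f_3(\zm^2-1)\e+o(\e)$; multiplying by $j=g_0+g_2\zm\se+o(\se)$ and taking the conditional expectation, every term odd in $\zm$ vanishes and the remaining term is $g_2f_2\,\E[\zm^2]\,\e=g_2f_2\e$. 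Thus $\E[\tF]-\E[F]=\E[g_2f_2]\e+o(\e)$, and since the first set of hypotheses yields $|g_2f_2|=|\g\s^2(\pt_s r+\g\,\pt_s f)\,\pt_s\nb_\th f|\le\g(1+\g)\s^2C^2\le 2\s^2C^2$, we get $|\E[\tF]-\E[F]|\le 2\s^2C^2\e+o(\e)$.

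\textbf{Variance difference.} Here I would reproduce the $I-II$ decomposition from the proof of Lemma~\ref{lemma: diff uc-gd sc-gd}. Since $\smp'$ is an independent copy of $\smp$ given $(\sm,\am)$, the factor $\nb_\th j'$ is conditionally independent of $j$ and shares the conditional law of $\nb_\th j$, so $\V[\tF]-\V[F]=I-II$ with $I=\E\big[\mathrm{Cov}\big(j^2,(\nb_\th j)^2\mid\sm,\am\big)\big]$ and $II=\big(\E[\E[j\nb_\th j\mid\sm,\am]]\big)^2-\big(\E[\d\,\nb_\th\d]\big)^2$. Inserting the $\e$-expansions and discarding odd $\zm$-moments gives $I=4\e\,\E[f_0g_0f_2g_2]+o(\e)$ and, using $\mathrm{Cov}(j,\nb_\th j\mid\sm,\am)=f_2g_2\e+o(\e)$, $II=2\e\,\E[f_0g_0]\,\E[f_2g_2]+o(\e)$, so that $\V[\tF]-\V[F]=2\e\big(2\E[f_0g_0f_2g_2]-\E[f_0g_0]\E[f_2g_2]\big)+o(\e)$. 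The second set of hypotheses makes $f_0,g_0,f_2,g_2$ almost surely bounded, hence the displayed leading coefficient finite; establishing the claimed $O(\e^2)$ bound then amounts to verifying that this leading coefficient actually vanishes for the $Q$-control $f_i,g_i$ (and, if it does not, the same estimate still delivers $O(\e)$).

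\textbf{Main obstacle.} The one genuine subtlety---already handled in Lemma~\ref{lemma: diff gd BFF control}---is that $f(s;\th)=\max_{a'}\Qst(s,a';\th)$ need not be differentiable in $s$ everywhere, so the Taylor expansions are valid only where $\pt_s f$ and $\pt_s^2 f$ exist; this is precisely the standing almost-sure hypothesis, so the expansions and every conditional expectation derived from them hold on a full-probability set and the argument goes through unchanged. Beyond that, the work is routine: tracking the $\e$-order of each summand in $j\,\nb_\th j$ and in $j^2(\nb_\th j)^2$, and using $\E[\zm]=\E[\zm^3]=0$ and $\E[\zm^2]=1$ to see which contributions cancel. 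This bookkeeping is what fixes the exact power of $\e$ in each bound---the part of the proof where one must be most careful---and in particular it is where the $O(\e^2)$ (rather than $O(\e)$) for the variance difference must be justified.
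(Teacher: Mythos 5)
Your proposal matches the paper's own treatment: the paper simply declares that the proof is ``almost the same'' as that of Lemma \ref{lemma: diff uc-gd sc-gd}, with the $f_i,g_i$ replaced by the $Q$-control expansions \eqref{temp2}, \eqref{temp3}, and that is exactly what you carry out. Your gradient-difference computation, $\E[\tF]-\E[F]=\E[g_2f_2]\e+o(\e)$ with the bound $2\s^2C^2\e+o(\e)$, is the intended argument. On the variance, you are right to be suspicious of the stated $O(\e^2)$: running the Lemma \ref{lemma: diff uc-gd sc-gd} computation with the control $f_i,g_i$ gives, exactly as you derive, $\V[\tF]-\V[F]=2\e\left(2\E[f_0g_0f_2g_2]-\E[f_0g_0]\E[f_2g_2]\right)+o(\e)$, and the boundedness hypotheses only make the leading coefficient finite, not zero. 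The honest conclusion of this argument is therefore $O(\e)$, matching Lemma \ref{lemma: diff uc-gd sc-gd}; the $O(\e^2)$ in the statement is not delivered by the paper's (or your) proof and appears to be a typo, consistent with the other mismatches between statements and proofs in these appendix lemmas (e.g.\ the $\se$ versus $\e$ discrepancy in Lemma \ref{lemma: diff gd BFF control}).
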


From the above theorem, we see that the magnitude of the difference is related to $\pt_s\Qst$,
$\pt_s\nb_\th\Qst$, and $\pt_{s'}r$. We can control the first two terms through the approximating
function space. This implies that if the reward $r(s',s,a)$ changes slowly w.r.t. $s'$, then the
sample-cloning algorithm for $Q$-control performs better.

\begin{proof}
  The proof of this Lemma is almost the same as the one of Lemma \ref{lemma: diff uc-gd sc-gd}, except
  that $f_i, g_i$ are the ones defined in the proof of Lemma \ref{lemma: diff gd BFF control}, that
  is, in \eqref{temp2}, \eqref{temp3}. Therefore, we omit the proof here.
\end{proof}

\subsubsection{Difference for the whole process} 
The p.d.f. of the parameters during the SC algorithm satisfies the  equation
\begin{align}
  \pt_t\tp = \nb\cdot\l[ \E [\tF] \tp + \frac{\eta}2\nb\cdot\l(\V[\tF]\tp\r)\r].\label{eq: pdf ds}
\end{align}
Therefore, the difference of the p.d.f.s $\td = p - \tp$ satisfies
\begin{align}
  &\pt_t\td = \nb\cdot\l[ \E [F] \td + \frac{\eta}2\nb\cdot\l(\V[F]\td\r)\r] + \nb\cdot\l[ \l(\E[F] - \E[\tF]\r)\tp + \frac{\eta}2\nb\cdot\l(\l( \V[F] - \V[\tF]\r)\tp\r)\r].\label{eq: diff ds}
\end{align}
Using this observation, we can prove the following theorem.

\begin{theorem}
\label{thm: diff of pdf 2}
The difference $\td$ of the p.d.f. between US and SC satisfies,
\begin{equation*}
  \begin{aligned}
    \ll \td(t) \rl_\st  \leq& e^{-\frac{\lam(\b)}4t}\ll p(0) - \pinf \rl_\st + O\l(\e\r) \sqrt{1-e^{-\frac{\lam(\b)}2t}}.
  \end{aligned}
\end{equation*}
\end{theorem}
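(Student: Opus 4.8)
The plan is to follow the proof of Theorem~\ref{thm: diff of pdf} almost verbatim, the crucial simplification being that for SC the per-step discrepancies $\E[\tF]-\E[F]$ and $\V[\tF]-\V[F]$ are merely $O(\e)$ by Lemmas~\ref{lemma: diff uc-gd sc-gd} and~\ref{lemma: control diff uc-gd sc-gd}, rather than the $O(\e\d)$ bound that BFF enjoys. Starting from the evolution \eqref{eq: diff ds} of $\td=p-\tp$, I would first use $\tp=p-\td$ to rewrite the forcing so that it acts on $p$ instead of $\tp$, i.e.
\begin{equation*}
  \E[\tF]\tp-\E[F]p=\E[F]\td+(\E[\tF]-\E[F])\td+(\E[\tF]-\E[F])p,
\end{equation*}
and similarly for the $\V$-terms. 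The autonomous part of the resulting equation is the US Fokker--Planck operator whose stationary state is $\pinf$, so multiplying \eqref{eq: diff ds} by $\td/\pinf$ and integrating in $\th$ produces a negative Dirichlet form; the Poincar\'e inequality \eqref{poincare ineq} converts part of it into a dissipation $-\tfrac{\lam}{2}\ll\td\rl^2_\st$, leaving a spare Dirichlet term to absorb the cross contributions.

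Next I would estimate the three groups of terms exactly as in the proof of Theorem~\ref{thm: diff of pdf}: the terms linear in $\td$ are bounded by $O(\e)\ll\td\rl^2_\st$ plus small multiples of $\int\pinf|\nb(\td/\pinf)|^2\,d\th$ (using boundedness of $\E[\td\cdot(\tF-F)]$-type coefficients and $\eta^2\b^2=O(1)$), and the term forced by $p$ is bounded, via Cauchy--Schwarz, by $O(\e^2)\int p^2/\pinf\,d\th+O(\e^2\eta^2)\int|\nb((p-\pinf)/\pinf)|^2\pinf\,d\th$ plus an absorbable Dirichlet piece. Since $\V[\tF]-\V[F]$ no longer carries a factor $\d$, this is precisely the inequality obtained for BFF but with $\E[\d^2]$ replaced by the constant $1$ in the forcing --- so Lemma~\ref{lemma: d p^2} is \emph{not} needed. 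Instead I would use the elementary identity $\int p^2/\pinf\,d\th=1+\ll p-\pinf\rl^2_\st$ together with the fact that $\ll p(t)-\pinf\rl^2_\st$ is non-increasing (the standard US energy estimate $\tfrac12\pt_t\ll p-\pinf\rl^2_\st\le-\lam\ll p-\pinf\rl^2_\st$, from the Poincar\'e inequality applied to \eqref{eq: pdf uncor}); hence $\int p^2/\pinf\,d\th\le 1+\ll p(0)-\pinf\rl^2_\st$ is a fixed constant. Setting $d=p-\pinf$, adding the US energy estimate, and using $\nb(\pinf/\pinf)=0$ gives
\begin{equation*}
  \pt_t\Bigl[e^{\frac{\lam}{2}t}\bigl(\ll\td\rl^2_\st+\ll d\rl^2_\st\bigr)\Bigr]\le O(\e^2)\,e^{\frac{\lam}{2}t}.
\end{equation*}
Integrating from $0$ to $t$, using $\td(0)=0$ (US and SC share the same initial p.d.f.\ $p(0)=\tp(0)$), dropping the nonnegative $\ll d(t)\rl^2_\st$, and taking square roots with $\sqrt{a+b}\le\sqrt a+\sqrt b$ yields $\ll\td(t)\rl_\st\le e^{-\frac{\lam}{4}t}\ll p(0)-\pinf\rl_\st+O(\e)\sqrt{1-e^{-\frac{\lam}{2}t}}$, which is the claim.

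The main obstacle is the bookkeeping in the energy estimate: each cross term must be split by a weighted Cauchy--Schwarz inequality with carefully chosen weights so that the pieces containing $\nb(\td/\pinf)$ sum to strictly less than the available Dirichlet coefficient, while all remaining contributions are either $O(\e)\ll\td\rl^2_\st$ (absorbed into the Poincar\'e dissipation once $\e$ is small) or $O(\e^2)$ sources with coefficients shown to be bounded. As in Theorem~\ref{thm: diff of pdf}, the argument as written needs $\e$ and $\eta$ small and the domain $\O$ compact so that \eqref{poincare ineq} holds and the reflective boundary terms vanish; whether compactness can be dropped is left open.
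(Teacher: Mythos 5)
Your proposal is correct and follows essentially the same route as the paper: the same decomposition of the forcing, the same weighted energy estimate with the Poincar\'e inequality, the same observation that Lemma~\ref{lemma: d p^2} is unnecessary because the SC per-step error carries no factor of $\d$, and the same Gronwall integration after adding the US energy estimate. The only (cosmetic) difference is that you bound $\int p^2/\pinf\,d\th$ by the exact identity $1+\ll p-\pinf\rl^2_\st$ plus monotonicity, whereas the paper uses $p^2\le 2d^2+2(\pinf)^2$ and absorbs the resulting $O(\e^2)\ll d\rl^2_\st$ into the dissipation; both work.
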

Unlike the evolution of $\hd$ in Theorem \ref{thm: diff of pdf}, the difference between SC and US
will eventually decay to $O(\e)$ instead of $O(\e\sqrt{\E[\d_\st^2]}\eta^{-\frac{\dthst}{4}})$. As a
result, the error of SC is much larger than that of BFF.

\begin{proof}
The analysis of $\ll \td \rl^2_\st$ is similar to the analysis of $\ll\hd \rl^2_\st $ before
applying Lemma \ref{lemma: d p^2}. Therefore, similar to \eqref{eq: proof hd 1}, we have
\begin{equation*}
    \begin{aligned}
    \frac12\pt_t \ll \td \rl^2_\st \leq&  - \frac\lam4\ll \td \rl^2_\st + O(\e^2)\int \frac{p^2}{\pinf} d\th  + O(\e^2\eta^2)\int \lv\nb\l(\frac{p - \pinf}{\pinf}\r)\rv^2\pinf d\th \\
    \leq & - \frac\lam4\ll \td \rl^2_\st + O(\e^2)\ll d\rl^2_\st + O(\e^2)  + O(\e^2\eta^2)\int \lv\nb\l(\frac{d}{\pinf}\r)\rv^2\pinf d\th ,
    \end{aligned}
\end{equation*}
where $d = p - \pinf$. Combining the above equation with \eqref{eq: proof d 1} and taking $d = p -
\pinf$, we have
\begin{equation*}
  \begin{aligned}
  \frac12\pt_t \l(\ll \hd \rl^2_\st + \ll d \rl^2_\st \r)  \leq&  - \frac\lam4\l(\ll \hd \rl^2_\st + \ll d \rl^2_\st \r)    + O(\e^2) \\
    \pt_t \l[e^{\frac\lam2t}\l(\ll \hd \rl^2_\st + \ll d \rl^2_\st \r)\r] 
    \leq& O(\e^2)e^{\frac\lam2t}.\\
    \end{aligned}
\end{equation*}
Integrating the above inequality on both sides leads to
\begin{equation*}
    \begin{aligned}
    \l(\ll \hd(t) \rl^2_\st + \ll d(t) \rl^2_\st \r)  \leq& e^{-\frac\lam2t}\l(\ll \hd(0) \rl^2_\st + \ll d(0) \rl^2_\st \r) + O\l(\e^2\r) (1-e^{-\frac\lam2t}).
    \end{aligned}
\end{equation*}
Since $\hd(0) = 0$, the above inequality is equivalent to, 
\begin{equation*}
    \begin{aligned}
    \ll \hd(t) \rl^2_\st  \leq& e^{-\frac\lam2t}\ll d(0) \rl^2_\st + O\l(\e^2\r) (1-e^{-\frac\lam2t}).
    \end{aligned}
\end{equation*}
\end{proof}

\subsection{BFF algorithm for $Q$-control}
\label{appendix: control algo}
\begin{algorithm}
\label{algo: control BFF para}
\caption{BFF}
\begin{algorithmic}[1]
    \REQUIRE $\eta$: learning rate
    \REQUIRE $Q^*(s; \th) \in \R^{|\A|}$: nonlinear function approximation of $Q$ parameterized by $\th$
    \REQUIRE $\jc(s_m, a_m, s_{m+1}; \th) = r(s_{m+1}, s_m, a_m) + \g \max_a Q^*(s_{m+1}, a; \th) - Q^*(s_m, a_m; \th)$
    \REQUIRE $\th_0$: Initial parameter vector
    \STATE $m \gets 0$
    \WHILE{$\th_m$ not converged}
        \STATE $s'_{m+1} \gets s_m + (s_{m+2} - s_{m+1})$
        \STATE $\hat{F}_m \gets \jc(s_m, a_m, s_{m+1}; \th_m)\nabla_\th \jc(s_m, a_m, s'_{m+1}; \th_m)$
        \STATE $\th_{m+1} \gets \th_m - \eta \hat{F}_m$
        \STATE $m \gets m+1$
    \ENDWHILE
\end{algorithmic}
\end{algorithm}

\begin{algorithm}
  \caption{BFF (tabular case)}
  \label{algo: bff tab control}
  \begin{algorithmic}[1]
    \REQUIRE $\eta$: Learning rate
    \REQUIRE $Q \in \R^{|\S|\times |\A|}$: matrix of $Q(s,a)$ values
    \REQUIRE $\jc(s_m, a_m, s_{m+1}) = r(s_{m+1}, s_m, a_m) + \g \max_a Q(s_{m+1}, a) - Q^\pi(s_m, a_m)$
    \STATE $m \gets 0$
    \WHILE{$Q^\pi$ not converged}
    \STATE $s'_{m+1} \gets s_m + (s_{m+2} - s_{m+1})$
    \STATE $\hat{F}_m \gets 0 \in \R^{|\S|\times |\A|}$
    \STATE $\hat{F}_m(s_m, a_m) \gets -\jc(s_m, a_m, s_{m+1})$
    \STATE $a_{m+1}^* \gets \argmax_a Q(s'_{m+1}, a)$
    \STATE $\hat{F}_m(s'_{m+1}, a_{m+1}^*) \gets \g\jc(s_m, a_m, s_{m+1})$
    \STATE $Q^\pi \gets Q^\pi - \eta \hat{F}_m$
    \STATE $m \gets m+1$
    \ENDWHILE
  \end{algorithmic}
\end{algorithm}

\subsection{Multiple future steps, tabular case}
\label{appendix: nbff tabular}
Algorithm \ref{algo: bff multiple tab} details the multiple-future-step version of BFF for the tabular control case.
\begin{algorithm}[H]
  \caption{BFF (Multiple-future-step, tabular case)}
  \label{algo: bff multiple tab}
  \begin{algorithmic}[1]
    \REQUIRE $\eta$: Learning rate
    \REQUIRE $Q \in \R^{|\S|\times |\A|}$: matrix of $Q(s,a)$ values
    \REQUIRE $\{\a_k\}_{k=1}^n$
    \REQUIRE $\jc(s_m, a_m, s_{m+1}) = r(s_{m+1}, s_m, a_m) + \g \max_a Q(s_{m+1}, a) - Q^\pi(s_m, a_m)$
    \STATE $m \gets 0$
    \WHILE{$Q^\pi$ not converged}
    \STATE $\hat{F}_m \gets 0 \in \R^{|\S|\times |\A|}$
    \FOR{$k = 1\ldots, n$}
    \STATE $s'_{m+k} \gets s_{m+k-1} + (s_{m+k+1} - s_{m+k})$
    \STATE $\hat{F}_m(s_m, a_m) \gets \hat{F}_m(s_m, a_m) - \jc(s_m, a_m, s_{m+1})$
    \STATE $a_{m+k}^* \gets \argmax_a Q(s'_{m+k}, a)$
    \STATE $\hat{F}_m(s'_{m+k}, a_{m+k}^*) \gets \hat{F}_m(s'_{m+k}, a_{m+k}^*) + \a_k\g\jc(s_m, a_m, s_{m+1})$
    \ENDFOR
    \STATE $Q^\pi \gets Q^\pi - \eta \hat{F}_m$
    \STATE $m \gets m+1$
    \ENDWHILE
  \end{algorithmic}
\end{algorithm}

\subsection{Experiment details} \label{experiment details}
\subsubsection{Tabular evaluation case}
The training procedure is as follows. We generate a long trajectory of length $T=10^7$ from the MDP
dynamics using a fixed policy $\pi(a | s) = \frac12 + a\frac{\sin(s)}{5}$. We use a learning rate of
$\eta = 0.5$ and a batch size of 50 for each of the methods. We find the exact matrix $Q^*$ by first
forming a Monte Carlo estimate of the transition matrix $\mP$ based on 50,000 repetitions per entry,
then forming the expected reward vector $R$ and solving the Bellman equation based on this estimate
for $\mP$.

\subsubsection{Tabular control case}
We find the exact $Q^*$ by running US on a trajectory of length $10^8$ with batch size 1000 and learning rate 0.5 to obtain an approximation $Q^1$. We then refine $Q^1$ by training via US on a trajectory of length $10^7$ with batch size 10000 and a learning rate of 0.1 to obtain the true $Q^*$. We confirm the correctness of $Q^*$ via Monte Carlo (not shown).

We test each of the methods (US, SC, and BFF) on a trajectory of length $5\times 10^7$ with a learning rate of 0.5 and a batch size of 100. The results are shown in Figure \ref{fig: tab control}. BFF outperforms SC by a wide margin and has performance comparable to US. Using a greater number of future steps to approximate the BFF gradient improved its performance marginally.

\begin{figure}[h]
\begin{centering}
  \includegraphics[width=\linewidth]{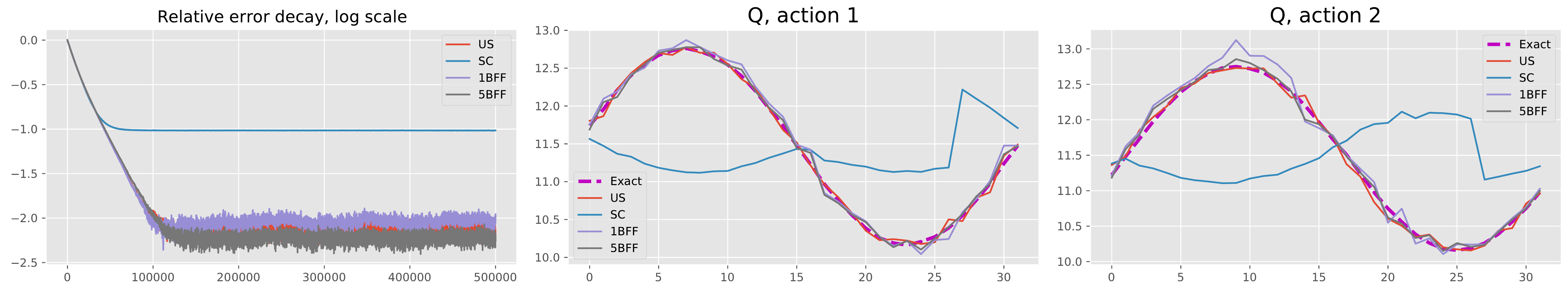}
  \caption{Results of each method for $Q$-control in the tabular case. SC is unable to learn an accurate approximation for $Q$, while BFF's performance is almost indistinguishable from US. Using 5 future steps to compute the BFF approximation helped improve its performance slightly.}
  \label{fig: tab control}
\end{centering}
\end{figure}

\subsubsection{The PD algorithm}
\label{appendix: pd}
The primal dual method transfer the  minimization problem to a mimimax problem, that is, 
\begin{equation*}
    \min_{\th} \max_{\o} \ \E_{(\sm,\am)}[ \d(\sm,\am;\th) y(\sm,\am;\o) - \frac12y(\sm,\am;\o)^2  ]
\end{equation*}
Therefore SGD applied to the above minimax problem does not have the double sampling problem
anymore. The algorithm updates the parameters in the following way,
\begin{align}
    &\o_{k+1} = \o_k + \b(\d(\sm,\am;\th_k))\nb_\o y(\sm,\am;\o_k) - y(\sm,\am;\o_k) \nb_\o y(\sm,\am;\o_k);\nonumber\\
    &\th_{k+1} = \th_k - \eta(\nb_\th\d(\sm,\am;\th_k) y(\sm,\am;\o_{k+1})).\nonumber
\end{align}
We usually set $y(s,a;\o)$ to be the same model as $Q(s,a;\th)$.

\subsubsection{$Q$-evaluation, continuous case}
\label{appendix: cts}
We use a neural network with two hidden layers to approximate Q. Each hidden layer has 50
neurons. The activations are $\cos(x)$ for the hidden layers and identity for the output layer.

The training procedure is similar to the tabular case. We generate a trajectory of length $10^6$ and
run BFF, SC, and US with batch size $M=50$ and learning rate $\eta=0.1$. We also train via PD with
$\beta=\eta=0.1$ and all other hyperparameters identical. We compute the exact $Q$ by running US on
a trajectory of length $10^7$.

As discussed previously, PD has unstable performance. In Figure \ref{fig: eval pd} we plot the
results of the 10 different runs of PD. We compare to the error of BFF and the exact $Q$ function
for reference.

\begin{figure}[h]
\begin{centering}
  \includegraphics[width=\linewidth]{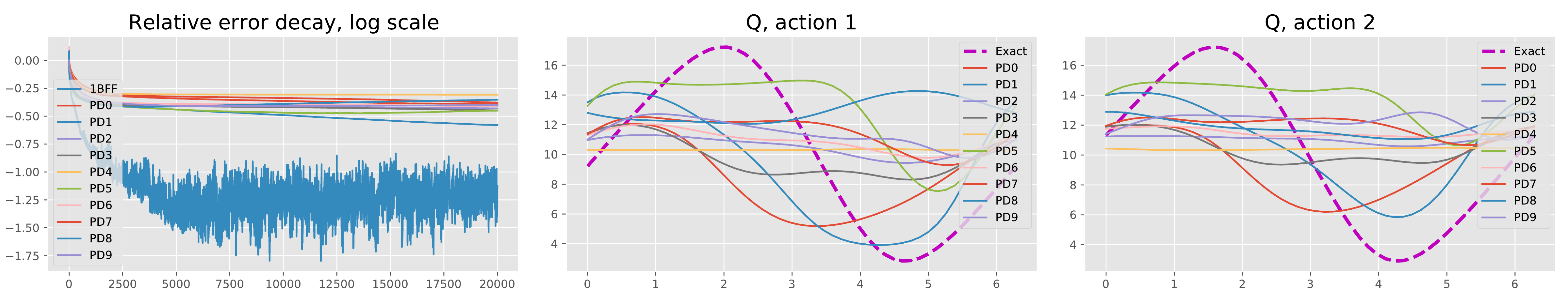}
  \caption{Results for each of the 10 runs of PD for fixed-policy $Q$-evaluation. We include the
    error plot for BFF as well as the exact $Q$ function for reference. The shapes of the $Q$ function
    learned by PD are quite unstable with large variation between runs.}
  \label{fig: eval pd}
\end{centering}
\end{figure}

\subsubsection{$Q$-control, continuous case}
\label{appendix: cts control}
The training procedure is identical to the continuous $Q$-evaluation experiment (with the same
hyperparameters, trajectory length, etc.), but we generate the trajectory with the fixed behavior
policy which samples an action uniformly at random. PD is again unstable and we report the results
of the 10 runs below.
\begin{figure}[h]
  \begin{center}
    \includegraphics[width=\linewidth]{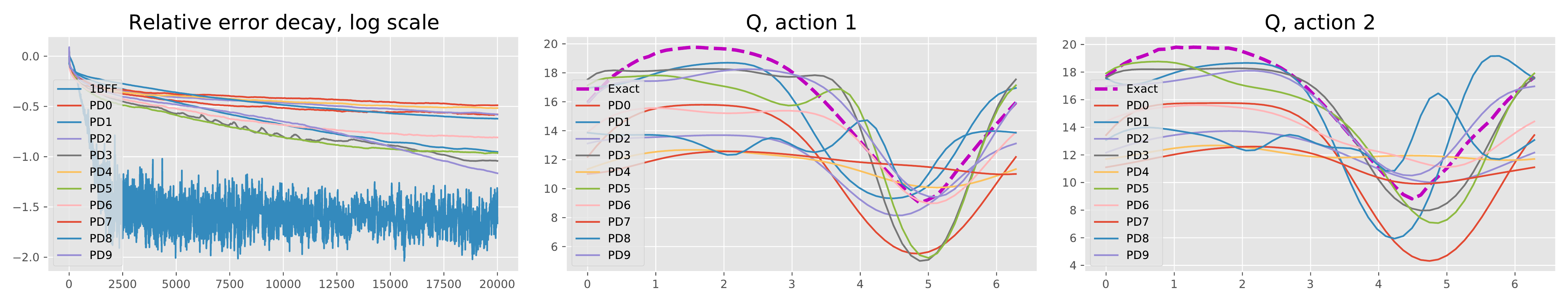}
  \end{center}
  \caption{Results for each of the 10 runs of PD for $Q$-control. We include the error plot for BFF
    as well as the exact $Q$ function for reference. As in the case of $Q$-evaluation, there is
    large variation in the quality of the learned $Q$ function.}
  \label{fig: nn_control_Q}
\end{figure}

\subsubsection{CartPole}
\label{appendix: cartpole}
We approximate $Q$ with a neural network with a single hidden layer of size 100. The hidden layer
has ReLU activations. For both BFF and sample-cloning, we train using Adam with the default settings
for $\beta_1$ and $\beta_2$ ($\beta_1 = 0.9$, $\beta_2 = 0.999$). For all of the methods, we use
batch size 50 and experience replay storing the 10,000 most recent experiences in the training
trajectory. We train for 200 episodes. We also use an $\epsilon$-greedy approach to generate the
trajectory. Initially, we set $\epsilon=1$ (so the agent acts completely randomly at the beginning
of training), and decay $\epsilon$ by $0.99$ after each parameter update. We stop decaying
$\epsilon$ when it reaches $0.1$, so there is always some randomness in our training actions to
prevent getting stuck on an ineffective policy.

For the PD algorithm, We tried fixed values for $\beta$ and $\eta$, as well as decaying $\beta$ and
$\eta$ with different starting values and with the decay recommended in
\cite{wang2017stochastic}. The results in Figure \ref{fig: cartpole} have $\beta_k = 0.1 \times
k^{-3/4}$ and $\eta_k = 0.1 \times k^{-1/2}$, where $\beta_k$ and $\eta_k$ denote the parameters
used for the $k$-th step.


\end{document}